\documentclass[12pt]{amsart}
\usepackage{microtype}
\usepackage{amsmath,amssymb,amsthm,mathtools, comment}
\usepackage{amsfonts}
\usepackage[T1]{fontenc}
\usepackage{lmodern,dutchcal}
\usepackage[a4paper,margin=28mm]{geometry}
\usepackage[hidelinks]{hyperref}
\usepackage{youngtab}
\usepackage{tikz}
\numberwithin{equation}{section}
\newtheorem{theorem}{Theorem}[section]
\newtheorem{proposition}[theorem]{Proposition}

\theoremstyle{definition}
\newtheorem{definition}[theorem]{Definition}
\newtheorem{example}[theorem]{Example}
\theoremstyle{remark}
\newtheorem{remark}[theorem]{Remark}
\newcommand{\cH}{\mathcal H}
\newcommand{\hH}{\widehat{\mathcal H}}
\newcommand{\gelement}{\mathbf{h}}

\newcommand{\dualg}{\mathbf{m}}
\newcommand{\mone}{{\mathbf{e}}}
\newcommand{\monebarinv}{\mathbf{f}}

\newcommand{\Ue}{{\mathsf{U}}}
\newcommand{\Sp}{\mathsf S}
\newcommand{\PC}{\mathrm{PC}}
\newcommand{\south}{\mathrm{south}}
\newcommand{\bbeta}{\boldsymbol{\beta}}

\newcommand{\NSym}{\mathrm{NSym}}
\newcommand{\QSym}{\mathrm{QSym}}
\newcommand{\Sym}{\mathrm{Sym}}
\newcommand{\NCSym}{\mathrm{NCSym}}
\newcommand{\NCQSym}{\mathrm{NCQSym}}
\newcommand{\hNCSym}{\mathsf{h}}
\newcommand{\eNCSym}{\mathsf e}
\newcommand{\FQSym}{\mathrm{FQSym}}
\newcommand{\gr}{\mathrm{gr}}
\newcommand{\eps}{\varepsilon}
\newcommand{\id}{\mathrm{id}}
\newcommand{\RHA}{\mathsf{RHA}}
\newcommand{\CHA}{\mathsf{CHA}}
\DeclareMathOperator{\Alg}{Alg}
\DeclareMathOperator{\sgn}{sgn}
\DeclareMathOperator{\st}{st}

\usepackage{xcolor}

\title[A universal framework for Hopf algebras in combinatorics]{A universal framework for Hopf algebras in combinatorics}
\author{Farid Aliniaeifard}
\address{
	F. Aliniaeifard, Research Center for Mathematics and Interdisciplinary Sciences,
	Shandong University \& Frontiers Science Center for Nonlinear Expectations, Ministry of Education,
	Qingdao, Shandong 266237, P.R.~China}
\email{farid@sdu.edu.cn}
\thanks{Farid  Aliniaeifard was supported in part by the Provincial Natural Science Foundation of Shandong, Project No. ZR2026MS0013.}
\author{Stephanie van Willigenburg}
\address{S. van Willigenburg, Department of Mathematics, University of British Columbia, Vancouver BC V6T 1Z2,
Canada}
\email{steph@math.ubc.ca}
\thanks{Stephanie van Willigenburg was supported in part by the Natural Sciences and Engineering Research Council of Canada.}

\date{}
\subjclass[2020]{05E05, 16T05, 16T30}
\keywords{Bernstein operator, group-like element, combinatorial Hopf algebra,
symmetric functions, Macdonald polynomials}
\begin{document}
\begin{abstract}
A universal framework for Hopf algebras in combinatorics provides a unified understanding of combinatorial objects. To establish such a framework, we dualize the theory of combinatorial Hopf algebras developed by Aguiar, Bergeron, and Sottile, obtaining the theory of rooted Hopf algebras. An object in this category is a graded connected Hopf algebra equipped with a group-like element, called a root, in its completion. These roots dualize the character theory of combinatorial Hopf algebras and provide a natural way to extend Bernstein and raising operators to other rooted Hopf algebras. The resulting constructions extend Schur, Hall--Littlewood, and Macdonald functions to rooted Hopf algebras. As an application, we develop these constructions for the Hopf algebra of symmetric functions in noncommuting variables and the Malvenuto--Reutenauer Hopf algebra of permutations.
\end{abstract}
\maketitle

\tableofcontents

\section{Introduction}\label{sec:intro}
Schur functions are central objects in many branches of mathematics, including representation theory, algebraic combinatorics, and algebraic geometry, and also arise in physics. Their generalizations include Hall--Littlewood and Macdonald polynomials, as well as analogues in other Hopf algebras such as the immaculate basis of noncommutative symmetric functions~\cite{BBS}, a lift of Schur $Q$-functions to the peak algebra~\cite{JL}, and Schur functions in noncommuting variables~\cite{ALvW}. These related families admit constructions using Bernstein and raising operators. We introduce the category of rooted Hopf algebras, dual to the category of combinatorial Hopf algebras of Aguiar, Bergeron, and Sottile~\cite{ABS}, and define universal expressions using   a collection of scalar series $\mathbf{C}$ and  a group-like element $\gelement$. Suitable choices of $\mathbf{C}$ and $\gelement$ recover the functions and operators mentioned above.

Operator constructions of symmetric functions provide important foundations for this framework. Bernstein operators for Schur functions first appear in ~\cite{Zel} and the
classical raising-operator formulas are described in
Macdonald's book~\cite[Chapters I and III]{Mac}.
Jing~\cite{Jing} constructed Hall--Littlewood functions using vertex
operators, and Shimozono and Zabrocki~\cite{SZ} generalized these operators
to study generalized Kostka polynomials. For Macdonald polynomials,
Kirillov and Noumi~\cite{KN} developed $q$-difference raising operators,
while Lapointe and Vinet~\cite{LV} obtained Rodrigues formulas using
creation operators. In \cite{BHMPS} Blasiak, Haiman, Morse, Pun, and Seelinger obtained a raising operator formula for Macdonald polynomials.

Let $\cH$ be a graded connected Hopf algebra, of finite dimension in each degree, and $\hH$  be its completion, that is,
\[
\cH=\bigoplus_{n\geq 0}\cH_n \qquad \text{and} \qquad \hH=\prod_{n\geq 0}\cH_n.
\] 
 Any element $a\in \hH$ is a formal sum $a=\sum_{n\geq 0} a_{n}$ with $a_{n}\in \cH_{n}$, and we let $\overline{a}=\sum_{n\geq 0}(-1)^n a_{n}$. A group-like element in the completion, or a root of $\cH$, is 
$\gelement=\sum_{n\geq0}\gelement_n$ where
\[
 \gelement_0=1,\qquad
 \Delta(\gelement_n)=\sum_{i+j=n}\gelement_i\otimes\gelement_j.
\]
The roots of $\cH$ form a group with inverse $S(\gelement)=\sum_{n\geq 0} S(\gelement_n)$ where $S$ is the antipode of $\cH$.  
We call $(\cH,\gelement)$ a  {rooted Hopf algebra}.  Graded duality
identifies its root with a character of $\cH^{\gr}=\bigoplus_{n\geq 0}\cH_n^*$, making the category of rooted Hopf algebras dual to the category of combinatorial Hopf algebras of Aguiar, Bergeron, and
Sottile~\cite{ABS}. Thus the initial object in the category of rooted Hopf algebras is  $(\NSym, H)$ with the unique graded Hopf morphism
\[
 J:\NSym\longrightarrow\cH,\qquad J(H_n)=\gelement_n.
\]
Let $\alpha=(\alpha_1,\dots, \alpha_r)$ be a composition, and for a root $\gelement$, let $\gelement_\alpha=\gelement_{\alpha_1}\dots \gelement_{\alpha_r}$.  
A root $\gelement$ in a non-commutative Hopf algebra (resp. commutative Hopf algebra) is \emph{free} if the set
$\{\gelement_\alpha:\alpha\text{ a composition}\}$ (resp. $\{\gelement_\lambda:\lambda\text{ a partition}\}$) is linearly independent; in this case, this set is a basis for $A_\gelement$ defined in Equation~\eqref{eq:A_h}.
 The set  $\{\dualg_\alpha\}$ is the dual of $\{\gelement_\alpha\}$, and detrmines 
\[
 \mone=\sum_{i\geq0}\dualg_{1^i},\qquad
 \monebarinv=\overline{\mone}^{\,-1}.
\] 
The Bernstein operators on $A_\gelement$ are
\[
 B_{(n,\gelement)}
   =\sum_{i\geq0}(-1)^iL_{\gelement_{n+i}}\mone_i^\perp,
\]
where $L_{\gelement_{n+i}}$ is left multiplication by $\gelement_{n+i}$. More generally, let 
$B_n{(a,\gelement)}
   =\sum_{i\geq0}a^iL_{\gelement_{n+i}}\mone_i^\perp$.
We also define the two-parameter Bernstein operators
\[
 \widetilde B_n(a,b;\gelement)
   =\sum_{i\geq0}b^iB_{n+i}(a,\gelement)\monebarinv_i^\perp.
\]
Then we show that
$$J\widetilde B_{n}(a,b;H)=\widetilde{B}_{n}(a,b;\gelement)J,$$
which shows that these generalizations are natural. At $(a,b)=(-1,0)$ and $(-1,q)$, their iterations in $(\NSym, H)$ give the immaculate functions and the modified Hall--Littlewood functions of  
   of Berg, Bergeron, Saliola, Serrano, and
Zabrocki~\cite{BBS}. 

Let $K$ be a commutative $\mathbb C$-algebra.  Fix an integer $r\geq0$ and a matrix
$\mathbf C=(C_{ij})_{1\leq i<j\leq r}$, where
\[
 C_{ij}(v)=\sum_{k\geq0}c_{ij,k}v^k \in K[[v]],\qquad c_{ij,0}=1.
\]
For an integer tuple $\alpha$ of length $r$,  we define our universal element, 
\begin{equation*}\label{eq:introW}
\Ue_\alpha(\mathbf{C};\gelement)
 =\left(\prod_{1\leq i<j\leq r}C_{ij}(R_{ij})\right)\gelement_\alpha,
 \qquad R_{ij}\alpha=\alpha+\epsilon_i-\epsilon_j.
\end{equation*}
Here $\epsilon_i$ is the $i$th coordinate vector. All shifts act on the integer
tuple before the ordered product is evaluated, with
$\gelement_0=1$ and $\gelement_d=0$ for $d<0$.
When all $C_{ij}$ are equal to one of the series 
\[
 C(v)=1-v,\qquad C(v)=\frac{1+av}{1-bv}
\]
$\Ue_\alpha(\mathbf{C};\gelement)$ recovers the value of iterated Bernstein and two-parameter Bernstein operators at $1$, respectively. The specialization $(a,b)=(-1,q)$  recovers modified Hall--Littlewood polynomials.

We can also  recover the
Macdonald raising formula of Blasiak, Haiman, Morse, Pun, and
Seelinger~\cite{BHMPS}. Their column-diagram data specify a matrix
$\mathbf C^\beta(q,t)$. Evaluating it at the inverse signed root
$\mathbf{g}=\overline{\gelement}^{-1}$ gives 
\[
 \widetilde{\mathsf M}_{\mu}(\gelement;q,t)
 =\Ue_{(1^{|\mu|})}({\mathbf C^{\mu^{\mathsf t}}(q,t)};\mathbf{g}).
\]
In $(\Sym,h)$ with $h=\sum_{n\geq0}h_n$, where $h_n$ is the complete homogeneous symmetric function, the inverse signed root is $e=\sum_{n\geq0}e_n$, where $e_n$ is the elementary symmetric function and this lift is the modified
Macdonald polynomial $\widetilde H_\mu(X;q,t)$. 

To extend these constructions from the root-generated subalgebra to the rooted Hopf algebra, we need to extend the action of the raising operators in a natural way,  so that we generalize the Schur, Hall--Littlewood, and Macdonald polynomials. This can be done for several well-known Hopf algebras  such as symmetric functions in noncommuting variables $\NCSym$, the Malvenuto--Reutenauer Hopf algebra of
permutations $\FQSym$, and the quasisymmetric functions in noncommuting variables $\NCQSym$. As demonstrations of our method, we present the cases of $\NCSym$ and $\FQSym$ by constructing raising operators on set partitions and permutations.

In Section~\ref{sec:completion}, we show that the group of characters of a combinatorial Hopf algebra is isomorphic to a group of roots. In Section~\ref{sec:rooted}, we establish a duality between the categories of rooted Hopf algebras and combinatorial Hopf algebras. Section~\ref{sec:operators} extends Bernstein operators to rooted Hopf algebras, while Section~\ref{subsec:seriess} introduces our universal element and shows how to obtain Schur, modified Hall--Littlewood, and Macdonald polynomials for any rooted Hopf algebra. In Sections~\ref{sec:NCSym} and \ref{sec:FQSym}, we extend the constructions to the Hopf algebra of symmetric functions in noncommuting variables and the Malvenuto--Reutenauer Hopf algebra of permutations.

\section{Completions and group-like elements}\label{sec:completion}
Throughout this paper, $\cH=\bigoplus_{n\geq0}\cH_n$ is a graded connected Hopf algebra
over $\mathbb C$, with $\dim\cH_n<\infty$ for every $n$. Its antipode and
counit are denoted by $S$ and $\eps$. 

\subsection{The completion and its duality}
The full dual, graded dual, and completion of $\cH$ are
$$
 \cH^*=\prod_{n\geq0}\cH_n^*,\qquad
 \cH^{\gr}=\bigoplus_{n\geq0}\cH_n^*, \text{~ and}\qquad
 \hH=\prod_{n\geq0}\cH_n,
$$
respectively.
Since $\dim\cH_n<\infty$ for all $n$,
$\hH=(\cH^{\gr})^*$. 

Any element $a\in \hH$ is a formal sum $a=\sum_{n\geq 0} a_{n}$ with $a_{n}\in \cH_{n}$. Let $u$ be an indeterminate. The \emph{associated
series}, \emph{signed element}, and \emph{signed series} of $a$ are
$$
 a(u)=\sum_{n\geq0}a_nu^n,\qquad 
 \overline{a}=\sum_{n\geq0}(-1)^na_n,\text{~and} \qquad \overline{a}(u)=a(-u),
$$
respectively. We let
$$
 ab=\sum_{n\geq0}\sum_{i+j=n}a_ib_j, \text{~and}
 \qquad S(a)=\sum_{n\geq0}S(a_n).
$$
We also define $$\Delta(a)=\sum_{n\geq 0} \Delta(a_{n}).$$ 
\begin{remark}
The completion $\hH$ is not in general a Hopf algebra with the algebraic
tensor product. Its coproduct takes values in the completed tensor product
$\prod_{i,j\geq0}\cH_i\otimes\cH_j$. 
\end{remark}

\begin{definition}
An element $\gelement=\sum_{n\geq0}\gelement_n\in\hH$ is
\emph{group-like} if $\gelement_0=1$ and
\begin{equation}\label{eq:dividedpowers}
 \Delta(\gelement_n)=\sum_{i+j=n}\gelement_i\otimes\gelement_j
 \qquad(n\geq0).
\end{equation}
We denote the set of all group-like elements by $G(\hH)$. We also say $\gelement(u)$ is \emph{group-like} if  $\gelement$ is group-like. 
\end{definition}

For an algebra $A$, a character is an algebra morphism from 
$A$ to $\mathbb{C}$. For a Hopf algebra $\cH$, using Sweedler notation, the convolution product of
$f,g\in\operatorname{Hom}(\cH,\mathbb C)$ is
\[
 (f*g)(a)=\sum_{(a)}f(a_{(1)})g(a_{(2)}).
\]
The characters form a group under convolution. We use the canonical
pairing $\langle a,f\rangle=f(a)$  which maps $A\otimes A^{*}$ to $\mathbb{C}$, with the algebra element in the first
position.

\begin{theorem}\label{thm:duality}
The set $G(\hH)$ is a group, with inverse
$\gelement^{-1}=S(\gelement)$. The map
$$
\begin{array}{ccc}
 G(\hH)&\longrightarrow&\Alg(\cH^{\gr},\mathbb C),\\
  \gelement&\longmapsto& f_{\gelement}
\end{array}
$$ where 
\[
 f_{\gelement}(a)=\sum_{n\geq 0}\langle \gelement_n, a\rangle
\]
is a group isomorphism.
\end{theorem}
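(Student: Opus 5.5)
The plan is to work degree by degree through the pairing $\cH_n\times\cH_n^*\to\mathbb C$. This pairing is nondegenerate because $\dim\cH_n<\infty$. We use the standard duality facts: the product on $\cH^{\gr}$ is dual to the coproduct of $\cH$, the coproduct of $\cH^{\gr}$ is dual to the product of $\cH$, the unit is dual to the counit, and the antipode of $\cH^{\gr}$ is $S^*$. Note that $f_\gelement(a)$ is a finite sum for $a\in\cH^{\gr}$, since only finitely many graded components of $a$ are nonzero. Extending by the evident formula, the map $a\mapsto\sum_n\langle a_n,\cdot\rangle$ identifies $\hH$ with all linear functionals on $\cH^{\gr}$, i.e.\ $\hH=(\cH^{\gr})^*$ as noted above.

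First I show that $f_\gelement$ is a character. For homogeneous $x\in\cH_i^*$ and $y\in\cH_j^*$ we have $xy\in\cH^*_{i+j}$, and
\[
f_\gelement(xy)=\langle \gelement_{i+j},xy\rangle=\langle\Delta(\gelement_{i+j}),x\otimes y\rangle=\sum_{p+q=i+j}\langle\gelement_p,x\rangle\langle\gelement_q,y\rangle=\langle\gelement_i,x\rangle\langle\gelement_j,y\rangle,
\]
where the last equality holds because the pairing vanishes unless the degrees match. Also $f_\gelement(1)=\langle\gelement_0,1\rangle=1$ since $\gelement_0=1$. Conversely, start from a character $f$ and define $\gelement_n\in\cH_n$ as the element representing $f|_{\cH_n^*}$. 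Reading the same computation backwards, multiplicativity on $\cH_i^*\otimes\cH_j^*$ for all $i,j$ gives $\Delta(\gelement_n)=\sum\gelement_i\otimes\gelement_j$. This uses nondegeneracy of the pairing on $\cH_i\otimes\cH_j$ against $\cH_i^*\otimes\cH_j^*$, together with the fact that $\Delta(\gelement_n)$ lies in $\bigoplus_{i+j=n}\cH_i\otimes\cH_j$ because $\Delta$ is graded. Finally, $f(1)=1$ forces $\gelement_0=1$, since $\cH_0=\mathbb C1$ by connectedness. Hence the map is a bijection.

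Next comes the homomorphism property. The product in $\hH$ is $\gelement\mathbf k=\sum_n\sum_{i+j=n}\gelement_i\mathbf k_j$. Pairing with $a\in\cH_n^*$ and using that the coproduct of $\cH^{\gr}$ is dual to the multiplication of $\cH$, we get
\[
\langle(\gelement\mathbf k)_n,a\rangle=\sum_{(a)}\langle\gelement_{\deg a_{(1)}},a_{(1)}\rangle\langle\mathbf k_{\deg a_{(2)}},a_{(2)}\rangle=(f_\gelement*f_{\mathbf k})(a).
\]
Thus $f_{\gelement\mathbf k}=f_\gelement*f_{\mathbf k}$. Since the characters form a group under convolution, the pullback along the bijection shows that $G(\hH)$ is closed under products. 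The identity is $1\in\hH$, which corresponds to the counit of $\cH^{\gr}$ (evaluation at the degree-$0$ part). Inverses exist because characters have convolution inverses $f\circ S_{\cH^{\gr}}$. Concretely, $\langle S(\gelement_n),a\rangle=\langle\gelement_n,S^*a\rangle$ gives $f_{S(\gelement)}=f_\gelement\circ S^*$, which is the convolution inverse of $f_\gelement$. Therefore $\gelement^{-1}=S(\gelement)$, and $G(\hH)$ is a group isomorphic to $\Alg(\cH^{\gr},\mathbb C)$.

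I do not expect a serious obstacle. The step needing the most care is the converse direction: recovering the full coproduct identity from multiplicativity. This needs the finite-dimensionality of each $\cH_n$, so that $(\cH_i\otimes\cH_j)^*=\cH_i^*\otimes\cH_j^*$ and the pairing is nondegenerate. A direct check of $S(\gelement)\gelement=1$ is also possible: apply $m(S\otimes\id)$ to $\Delta(\gelement_n)$ to get $\sum_{i+j=n}S(\gelement_i)\gelement_j=\eps(\gelement_n)=\delta_{n0}$.
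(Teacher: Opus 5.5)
Your proof is correct. The character computation, the reconstruction of $\gelement$ from a character via nondegeneracy of the graded pairing, and the identity $f_{\gelement\mathbf k}=f_\gelement*f_{\mathbf k}$ match the paper's proof. You are slightly more careful in places, for instance in using $\cH_0=\mathbb C1$ to get $\gelement_0=1$.

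Where you differ is in how you obtain the group structure on $G(\hH)$. The paper verifies it directly inside $\hH$:
\begin{itemize}
\item it checks that a product of group-like elements is group-like;
\item it derives $S(\gelement)\gelement=\gelement S(\gelement)=1$ from the antipode axioms applied to $\Delta(\gelement_n)$;
\item it shows $S(\gelement)$ is group-like using $\Delta\circ S=(S\otimes S)\circ\tau\circ\Delta$.
\end{itemize}
You instead transport the structure from the character group. Because $a\mapsto f_a$ is injective on all of $\hH$ and multiplicative for arbitrary $a,b\in\hH$, closure follows from the fact that convolutions of characters are characters. The formula $\gelement^{-1}=S(\gelement)$ then follows from $f_{S(\gelement)}=f_\gelement\circ S^*$ being the convolution inverse of $f_\gelement$.

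Your route is shorter. It also makes explicit that $a\mapsto f_a$ is an algebra isomorphism $\hH\to(\cH^{\gr})^*$, with convolution on the right. However, it relies on the finite-dimensionality of each $\cH_n$, and on the standard facts that $S^*$ is the antipode of $\cH^{\gr}$ and that $f\circ S^*$ inverts a character $f$. The paper's direct argument shows that $G(\hH)$ is a group with inverse $S(\gelement)$ for any graded connected Hopf algebra, independently of the duality.
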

\begin{proof}
For two group-like series $\mathbf{g},\gelement$, 
$$\Delta((\mathbf{g}\gelement)_n)=\sum_{i+j=n} (\mathbf{g}\gelement)_i \otimes (\mathbf{g}\gelement)_j{,}$$
and  $(\mathbf{g}\gelement)_0=1$. Thus, $\mathbf{g}\gelement$ is a group-like element. 
For $n>0$, the antipode axioms give
\[
 \sum_{i=0}^n S(\gelement_i)\gelement_{n-i}=0,
 \qquad
 \sum_{i=0}^n \gelement_iS(\gelement_{n-i})=0.
\]
For $n=0$ both sums equal $1$. Thus 
$S(\gelement)\gelement=\gelement S(\gelement)=1$, and so $\gelement^{-1}=S(\gelement)$.
Let $\tau(a\otimes b)=b\otimes a$. The inverse of $\gelement$ is also group-like, since
{
\begin{align*}
 \Delta((\gelement^{-1})_n)
 &=\Delta(S(\gelement)_n)
   =(S\otimes S)\tau(\Delta(\gelement_n))\\
 &=\sum_{i+j=n}S(\gelement_i)\otimes S(\gelement_j)
   =\sum_{i+j=n}(\gelement^{-1})_i\otimes(\gelement^{-1})_j.
\end{align*}
}

For $a\in\cH_i^*$ and $b\in\cH_j^*$, we have
\begin{align*}
 f_{\gelement}(ab)
 &=\langle\gelement_{i+j},ab\rangle
 =\langle\Delta(\gelement_{i+j}),a\otimes b\rangle\\
 &=\sum_{p+q=i+j}\langle\gelement_p,a\rangle
                         \langle\gelement_q,b\rangle
 =\langle\gelement_i,a\rangle\langle\gelement_j,b\rangle.
\end{align*}
The last equality follows because the graded pairing is zero in unequal
degrees. Moreover, 
$f_{\gelement}(1)=1$, so $f_{\gelement}$ is a character.

Conversely, let $f:\cH^{\gr}\to\mathbb C$ be a character.
Since $\dim \cH_n<\infty$, $(\cH_n^*)^*\cong\cH_n$ and so there is a
unique $\gelement_n$ with
$\langle\gelement_n,a\rangle=f(a)$ for every $a\in\cH_n^*$. Now we show that $\gelement=\sum_{n\geq 0}\gelement_n$ is a group-like element. 
For $a\in\cH_i^*$ and $b\in\cH_j^*$,
$$
 \left\langle\Delta(\gelement_{i+j})
       -\sum_{p+q=i+j}\gelement_p\otimes\gelement_q,
       a\otimes b\right\rangle
 =f(ab)-f(a)f(b)=0.
$$
Since the pairings are nondegenerate, we have that $$\Delta(\gelement_{i+j})
       -\sum_{p+q=i+j}\gelement_p\otimes\gelement_q=0.$$ Also, $f(1)=1$, so $\gelement$ is a group-like element. 

We  show that the map is a group morphism. For
$a\in\cH_n^*$,  we have
\[
\langle \mathbf{g}_i \mathbf{h}_j, a \rangle
= \sum_{(a)} \langle \mathbf{g}_i, a_{(1)} \rangle \langle \mathbf{h}_j, a_{(2)} \rangle.
\]
Therefore,
\[
f_{\mathbf{gh}}(a)
=\sum_{n\geq 0}\sum_{i+j=n} \langle \mathbf{g}_i\gelement_j, a\rangle
=  \sum_{n \geq 0} \sum_{i+j=n} \sum_{(a)} \langle \mathbf{g}_i, a_{(1)} \rangle \langle \mathbf{h}_j, a_{(2)} \rangle
\]
\[
= \sum_{(a)} \left( \sum_{i \geq 0} \langle \mathbf{g}_i, a_{(1)} \rangle \right)
        \left( \sum_{j \geq 0} \langle \mathbf{h}_j, a_{(2)} \rangle \right)=(f_{\mathbf g}*f_{\gelement})(a).
\]
\end{proof}

\subsection{Examples}
In $\Sym$, write $h=\sum_{n\geq0}h_n$ and $e=\sum_{n\geq0}e_n$ for the
complete homogeneous and elementary symmetric function series. Both are group-like, and
$$
 S(h(u))=h(u)^{-1}=e(-u),\qquad
 q(u)=h(-u)^{-1}h(u)=e(u)h(u).
$$
The series $h(u)$, $h(-u)$, $h(u)^{-1}$, and $q(u)$ are group-like, and
$q(-u)=q(u)^{-1}$.

In $\NSym=\mathbb C\langle H_1,H_2,\ldots\rangle${, let $H(u)=\sum_{n\geq0}H_nu^n$ with $H_0=1$, and $E(u)=H(-u)^{-1}$; see~\cite{GKLRT}.} Also
{ 
\begin{gather*}
 \Delta(H_n)=\sum_{i+j=n}H_i\otimes H_j,\qquad S(H_n)=(-1)^nE_n,\\
 Q(u)=H(-u)^{-1}H(u)=E(u)H(u).
\end{gather*}
}


\section{Rooted Hopf algebras}\label{sec:rooted}
\begin{definition}
A \emph{rooted Hopf algebra} is a pair $(\cH,\gelement)$ where $\cH$ is a graded connected Hopf algebra and
$\gelement\in G(\hH)$. A \emph{rooted morphism}
$\Psi:(\cH,\gelement)\to(\cH',\gelement')$ is a graded Hopf morphism
satisfying $\Psi(\gelement_n)=\gelement'_n$ for all $n$. The rooted Hopf algebras with rooted morphisms form the category $\RHA$.
\end{definition}
 {As throughout the paper, all Hopf algebras in these categories have finite-dimensional homogeneous components.}
A \emph{combinatorial Hopf algebra} is a pair $(\cH,\zeta)$, where
$\zeta:\cH\to\mathbb C$ is a character.  {A morphism of combinatorial Hopf algebras $\Psi$ from $(\cH,\zeta)$ to $(\cH',\zeta')$ is a graded Hopf morphism satisfying}
$\zeta'\circ\Psi=\zeta$. We write $\CHA$ for the category of combinatorial Hopf algebras with its morphisms. 

Let \(\mathsf{C}\) be a category. The \emph{opposite category} (or \emph{dual category}) \(\mathsf{C}^{\mathrm{op}}\) is the category with the same objects as $\mathsf{C}$; however, for every pair of objects $A,B\in\mathsf{C}$
$$
\mathrm{Hom}_{\mathsf{C}^{\mathrm{op}}}(A, B) = \mathrm{Hom}_{\mathsf{C}}(B, A).
$$

Let \(\mathsf{C}\) and \(\mathsf{D}\) be categories. A functor
\[
\mathsf{F} : \mathsf{C} \to \mathsf{D}
\]
is an \emph{equivalence of categories} if there exists a functor
\[
\mathsf{G} : \mathsf{D} \to \mathsf{C}
\]
together with natural isomorphisms
\[
\mathsf{G} \circ \mathsf{F} \cong \mathrm{id}_{\mathsf{C}},
\qquad
\mathsf{F} \circ \mathsf{G} \cong \mathrm{id}_{\mathsf{D}}.
\]
The categories \(\mathsf{C}\) and \(\mathsf{D}\) are then said to be \emph{equivalent}, written
\[
\mathsf{C} \simeq \mathsf{D}.
\]
Also, we say the category $\mathsf{C}$ is the dual of the category $\mathsf{D}$ if $\mathsf{C}^{\mathrm op}\simeq\mathsf D$. 
\begin{theorem}\label{thm:initial}
The graded duality functor that sends $(\cH,\gelement)$ to $(\cH^\gr, f_\gelement)$ gives an equivalence
$\RHA^{\mathrm{op}}\simeq\CHA$. Thus ${\RHA}$ is the dual of $\CHA$. The rooted Hopf algebra $(\NSym,H)$
is the initial object in $\RHA$.  More precisely, for each $(\cH,\gelement)$ there is a unique
rooted morphism
\begin{equation*}\label{eq:J}
 J:(\NSym,H)\longrightarrow(\cH,\gelement),
 \qquad J(H_n)=\gelement_n.
\end{equation*}
The  image of $J$,
\begin{equation}\label{eq:A_h}
A_{\gelement}=\mathbb C\langle\gelement_1,\gelement_2,\ldots\rangle,
\end{equation}
is a cocommutative graded Hopf subalgebra of $\cH$.
\end{theorem}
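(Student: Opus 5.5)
The proof has three parts: the duality functor, the universal property of $(\NSym,H)$, and the structure of the image $A_\gelement$.

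\emph{Duality.} Define $\mathsf F(\cH,\gelement)=(\cH^{\gr},f_\gelement)$, where $f_\gelement$ is a character of $\cH^{\gr}$ by Theorem~\ref{thm:duality}. On a rooted morphism $\Psi:(\cH,\gelement)\to(\cH',\gelement')$ let $\mathsf F(\Psi)=\Psi^{\gr}:\cH'^{\gr}\to\cH^{\gr}$ be the graded transpose. Since $\Psi$ is a graded Hopf morphism and the components are finite-dimensional, $\Psi^{\gr}$ is again a graded Hopf morphism. For $a\in\cH_n'^*$ we have
\[
f_\gelement(\Psi^{\gr}a)=\langle\gelement_n,\Psi^{\gr}a\rangle=\langle\Psi(\gelement_n),a\rangle=\langle\gelement'_n,a\rangle=f_{\gelement'}(a),
\]
so $f_\gelement\circ\Psi^{\gr}=f_{\gelement'}$ and $\mathsf F(\Psi)$ is a $\CHA$-morphism $(\cH'^{\gr},f_{\gelement'})\to(\cH^{\gr},f_\gelement)$. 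Functoriality is immediate, so $\mathsf F:\RHA^{\mathrm{op}}\to\CHA$. The quasi-inverse $\mathsf G$ sends $(\mathcal K,\zeta)$ to $(\mathcal K^{\gr},\gelement_\zeta)$, where $\gelement_\zeta$ is the root corresponding to $\zeta$ under the bijection of Theorem~\ref{thm:duality} applied to $\mathcal K^{\gr}$, using $(\mathcal K^{\gr})^{\gr}\cong\mathcal K$. Morphisms go to transposes. The canonical isomorphisms $\cH\to(\cH^{\gr})^{\gr}$ carry $\gelement$ to the root attached to $f_\gelement$, because both pair identically with every $a\in\cH_n^*$. They are natural, since transposition commutes with double duals. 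The same holds for $\mathcal K\to(\mathcal K^{\gr})^{\gr}$ and $\zeta$. This gives $\RHA^{\mathrm{op}}\simeq\CHA$.

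\emph{Initial object.} $H=\sum H_n$ is group-like in $\widehat{\NSym}$, so $(\NSym,H)$ lies in $\RHA$. Since $\NSym$ is free on $H_1,H_2,\dots$, there is a unique algebra map $J$ with $J(H_n)=\gelement_n$; it is graded. It is a coalgebra map because the algebra maps $\Delta\circ J$ and $(J\otimes J)\circ\Delta$ agree on the generators:
\[
\Delta J(H_n)=\sum_{i+j=n}\gelement_i\otimes\gelement_j=(J\otimes J)\Delta(H_n).
\]
The counits also agree. A bialgebra map between Hopf algebras automatically commutes with the antipodes. Any rooted morphism must send $H_n\mapsto\gelement_n$ and is an algebra map, so uniqueness follows from freeness. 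Alternatively, one could dualize the known fact that $(\QSym,\zeta_Q)$ is terminal in $\CHA$ (Aguiar--Bergeron--Sottile), but the direct argument avoids any dependence on that result.

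\emph{The image.} $A_\gelement=J(\NSym)$ is the image of a Hopf morphism, so it is a graded subbialgebra closed under $S$, hence a graded Hopf subalgebra. It is connected since $\gelement_0=1$. It is cocommutative because $\NSym$ is cocommutative: $\tau\Delta(H_n)=\Delta(H_n)$, and both $\Delta$ and $\tau\circ\Delta$ are algebra maps, so they agree everywhere. Therefore
\[
\tau\Delta J=(J\otimes J)\tau\Delta=(J\otimes J)\Delta=\Delta J.
\]

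The only delicate point I expect is the bookkeeping in the equivalence. One must check that the double-dual identifications respect the roots and characters and are natural, and keep the morphism directions straight under transposition. Everything else reduces to checking identities on generators.
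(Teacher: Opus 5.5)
Your proof is correct and follows the paper's argument essentially step for step. You build the functor from graded transposes together with Theorem~\ref{thm:duality}, take the quasi-inverse via the double-dual identification, obtain $J$ from the freeness of $\NSym$, and deduce that the image is a cocommutative Hopf subalgebra from the cocommutativity of $\NSym$; along the way you supply details the paper leaves implicit, namely that $J$ is a coalgebra map (checked on generators), compatibility with the antipode, and the explicit $\tau\Delta$ computation.
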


\begin{proof}
By Theorem~\ref{thm:duality}, the map $\gelement\mapsto f_\gelement$
is a group isomorphism
$G(\hH)\to\Alg(\cH^{\gr},\mathbb C)$.
This yields a functor
\[
 \mathbf F:\RHA^{\mathrm{op}}\to\CHA,\qquad
 (\cH,\gelement)\mapsto(\cH^\gr,f_\gelement),
\]
sending a rooted morphism $\Psi: \cH \longrightarrow \cH'$ to its graded dual $\Psi^\gr: (\cH')^{\gr} \longrightarrow \cH^{\gr}$.
Indeed, for $a\in(\cH'_n)^*$,
\[
 f_{\gelement}(\Psi^{\gr}(a))=\langle \gelement, \Psi^{\gr}(a)\rangle
=\langle\Psi(\gelement_n),a\rangle =\langle \gelement', a\rangle
 =f_{\gelement'}(a),
\]
where $\Psi^\gr$ is the dual of $\Psi$. 

Conversely, given $(\cH^{\gr},\zeta)\in\CHA$, by  Theorem~\ref{thm:duality}
we have a unique $\gelement\in G(\hH)$ with $f_\gelement=\zeta$.
Define
\[
 \mathbf G:\CHA\to\RHA^{\mathrm{op}},\qquad
 (\cH^{\gr},\zeta)\mapsto(\cH,\gelement).
\]
For a $\CHA$-morphism $\Psi:(\cH^{\gr},\zeta)\to((\cH')^{\gr},\zeta')$, the
identity $\zeta'\circ\Psi=\zeta$ gives, for $b\in\cH_n^*$,
\[
 \langle\gelement'_n,\Psi(b)\rangle
 =\zeta'(\Psi(b))=\zeta(b)
 =\langle\gelement_n,b\rangle,
\]
hence $\langle\Psi^{\gr}(\gelement'_n),b\rangle=\langle \gelement_n, b\rangle$, and so 
 {$\Psi^{\gr}(\gelement'_n)=\gelement_n$}, thus, $\Psi^{\gr}$ is rooted. 
 {Therefore, canonical graded dual identifications give natural isomorphisms between the two composites and the identity functors.} Thus $\RHA^{\mathrm{op}}\simeq\CHA$.

Since $\NSym$ is freely generated by the $H_n$ with
$\Delta(H_n)=\sum_{i+j=n}H_i\otimes H_j$, the assignment
$J(H_n)=\gelement_n$ extends uniquely to an algebra morphism
$J:\NSym\to\cH$.  It is rooted by construction, and uniqueness
is forced by freeness of $\NSym$. Hence $(\NSym,H)$ is initial in $\RHA$.
 {Its image $A_{\gelement}$ is a graded Hopf subalgebra and is cocommutative because $\NSym$ is cocommutative.}
\end{proof}

\begin{remark}
The map $J$ also appears in the proof of \cite[Theorem 4.1]{ABS} and in \cite[Theorem 2.15]{Zhao}.
\end{remark}
\begin{remark}\label{rem:ABSduality}
The graded duality of Theorem~\ref{thm:initial} also allows further
constructions of Aguiar, Bergeron, and Sottile~\cite{ABS} to be dualized
in the present setting. For example, for any root $\gelement$, we set $I_{\gelement}$ to be the two-sided ideal of $\cH$ defined by
$
 I_{\gelement}=\langle E_n(\gelement):n>0\rangle,
$
where $E_n(\gelement)=\sum_{i=0}^n(-1)^i \gelement_i \gelement_{n-i}.$ Then $I_\gelement$ is a Hopf ideal and $(\cH/I_\gelement)^\gr$ identifies the elements in $\cH^\gr$ whose homogeneous components satisfy the generalized Dehn--Sommerville relations of
\cite[Section 5]{ABS}, and the annihilators of $I_\gelement$ satisfy the Bayer--Billera relations in the
form of {\cite[Example 5.10]{ABS}}. Moreover, the dual of the descent-to-peak map in {\cite[Example 4.9]{ABS}} is 
    \[
    \begin{array}{cccc}
    \theta:& {\NSym}& \longrightarrow &\NSym\\
     &H_n&\rightarrow& E_n(H).
    \end{array}
    \] 
\end{remark}

A composition $\alpha\models n$ is a finite sequence of positive integers
with sum $n$; its length is $\ell(\alpha)$. A partition $\lambda\vdash n$ is a weakly decreasing composition of $n$.  The empty composition has size
and length zero. For a group-like element $\gelement=\sum_{n\geq 0}\gelement_n$, let $\gelement_0=1$, $\gelement_j=0$ for $j<0$. For a composition $\alpha=(\alpha_1,\ldots,\alpha_r)$, let
$\gelement_\alpha=\gelement_{\alpha_1}\cdots\gelement_{\alpha_r}$.

\begin{definition}\label{def:free}
The root $\gelement$ in a non-commutative Hopf algebra (resp. commutative Hopf algebra) is \emph{free} if the set
$\{\gelement_\alpha:\alpha\text{ a composition}\}$ (resp. $\{\gelement_\lambda:\lambda\text{ a partition}\}$) is linearly independent; in this case, this set is a basis for $A_\gelement$ defined in Equation~\eqref{eq:A_h}.
\end{definition}

 {For every integer tuple $\gamma=(\gamma_1,\ldots,\gamma_r)$, we also write $\gelement_\gamma=\gelement_{\gamma_1}\cdots\gelement_{\gamma_r}$, with the same conventions $\gelement_0=1$ and $\gelement_d=0$ for $d<0$.}

For a free root $\gelement$, we fix the following notation.
Let $\{\dualg_\beta\}$ be the basis of $A_{\gelement}^{\gr}$ defined by
\begin{equation}\label{eq:dualnotation}
 \langle\gelement_\alpha,\dualg_\beta\rangle=\delta_{\alpha,\beta}
\end{equation}
(the indices $\alpha,\beta$ above run over compositions in the noncommutative case and partitions in the commutative case).
 Let $\mone_i=\dualg_{1^i}$ and $\mone=\sum_{i\geq0}\mone_i$; moreover, 
\begin{equation}\label{eq:fnotation}
 \monebarinv=\overline{\mone}^{\,-1}=S(\overline{\mone}),\qquad
 \monebarinv(u)=\mone(-u)^{-1}=\sum_{i\geq0}\monebarinv_i u^i.
\end{equation}

The empty index gives $\mone_0=\monebarinv_0=\eps$.
These functionals are uniquely determined on $A_{\gelement}$.
Both $\mone$ and $\monebarinv$ are group-like in
$\widehat{A_{\gelement}^{\gr}}$. Under $J^{\gr}$ they correspond to
$\sum_i M_{1^i}$ and $\sum_i F_{(i)}$, respectively. In particular, when $A_{\gelement}^\gr$ is non-commutative, we have
\begin{equation}\label{eq:ftom}
    \monebarinv_i=\sum_{\beta\models i}\dualg_\beta,
\end{equation}
and when it is commutative, 
\begin{equation}\label{eq:ftom1}
    \monebarinv_i=\sum_{\lambda\vdash  i}\dualg_\lambda.
\end{equation}

\subsection{Examples}
In $\Sym$, the root $h$ is free. Use the Hall inner product
$\langle h_\lambda,m_\mu\rangle=\delta_{\lambda,\mu}$ for partitions
$\lambda,\mu$. The corresponding dual series are
\begin{equation*}\label{eq:halldual}
 \dualg_\lambda\longleftrightarrow m_\lambda,\qquad
 \mone_i\longleftrightarrow m_{1^i}=e_i,
 \qquad \monebarinv_i\longleftrightarrow h_i.
\end{equation*}

In $\NSym$, under the standard pairing
$\langle H_\alpha,M_\beta\rangle=\delta_{\alpha,\beta}$ between
$\NSym$ and $\QSym$, these elements correspond to
\begin{equation*}\label{eq:standarddual}
 \dualg_\alpha\longleftrightarrow M_\alpha,\qquad
 \mone_i\longleftrightarrow M_{1^i}=e_i,
 \qquad \monebarinv_i\longleftrightarrow h_i=F_{(i)}.
\end{equation*}

\section{Bernstein series and their specializations}\label{sec:operators}
For $f\in\cH^{\gr}$, define
$$
\begin{array}{cccccc}
   f^\perp:  & \cH & \xrightarrow{\Delta}& \cH\otimes \cH& \xrightarrow{f\otimes \id}  \mathbb{C} \otimes \cH \xrightarrow{\cong} \cH.
\end{array}$$  Using Sweedler notation, 
\begin{equation*}\label{eq:skew}
 f^\perp(a)=\sum_{(a)}\langle a_{(1)},f\rangle a_{(2)}.
\end{equation*}
By \cite[Proposition 2.8.2]{GR}, $\langle f^\perp(a),b\rangle=\langle a,fb\rangle$ for
$b\in\cH^{\gr}$, and $(fg)^\perp=g^\perp f^\perp$.
Note that for $a_n\in \cH_n$  {and $g_i\in\cH_i^*$},
\[
 g_i^\perp(a_n)=(g_i\otimes\id)\Delta(a_n)
       \in\cH_{n-i}\quad(0\leq i\leq n),
 \qquad g_i^\perp(a_n)=0\quad(i>n).
\]

{ 
\begin{definition}
Let $g=\sum_{i\geq0}g_i$ and $s=\sum_{j\geq0}s_j$ belong to
$\widehat{\cH^{\gr}}$, with $g_i\in\cH_i^*$ and $s_j\in\cH_j^*$.
For $a=\sum_{n\geq0}a_n\in\hH$, define
\[
 g_i^\perp(a)=\sum_{m\geq0}g_i^\perp(a_{m+i})\in\hH,
 \qquad
 g^\perp(u)(a)=\sum_{i\geq0}g_i^\perp(a)u^i\in\hH[[u]].
\]
These formal series are always defined, as is
\[
 g^\perp(u)s^\perp(v)(a)
 =\sum_{i,j\geq0}g_i^\perp s_j^\perp(a)u^iv^j
 \in\hH[[u,v]].
\]
The specialization $g^\perp(a)=g^\perp(1)(a)$ is defined whenever,
for every $m\geq0$, $g_i^\perp(a_{m+i})=0$ for all sufficiently large $i$;
then
\[
 g^\perp(a)=\sum_{m\geq0}\sum_{i\geq0}g_i^\perp(a_{m+i})\in\hH.
\]
In particular, this condition holds for $a\in\cH$.
\end{definition}
}

 {For a free root, the functionals $\mone_i$ and $\monebarinv_i$ are defined on $A_{\gelement}$, and their skew operators below act on $A_{\gelement}$. Their formal series extend coefficientwise to $\widehat{A_{\gelement}}$. We write $L_c$ for left multiplication by $c$.}

 Equation~\eqref{eq:fnotation} gives
\begin{equation*}\label{eq:skewinverse}
 \mone^\perp(-u)\monebarinv^\perp(u)
 =\monebarinv^\perp(u)\mone^\perp(-u)=\id.
\end{equation*}

{ 
\begin{definition}
For a free root $\gelement$, the \emph{Bernstein--Laurent series} is 
\begin{equation*}\label{eq:Bseries}
 B(a,\gelement;z)
 =L_{\gelement(z)}\mone^\perp(a/z)
   =\sum_{n\in\mathbb Z}B_n(a,\gelement)z^n,
   \end{equation*}
   where
   \begin{equation*}\label{eq:Bmodes}
 B_n(a,\gelement)
=\sum_{i\geq0}a^iL_{\gelement_{n+i}}\mone_i^\perp,
\end{equation*}
and
the \emph{Bernstein operator of index $n$} is
\begin{equation*}\label{eq:Bsigned}
 B_{(n,\gelement)}=B_n(-1,\gelement)
   =\sum_{i\geq0}(-1)^iL_{\gelement_{n+i}}\mone_i^\perp.
\end{equation*}
\end{definition}

\begin{definition}
For a free root $\gelement$, let
\begin{align}
 \widetilde B(a,b;\gelement;z)
   &=L_{\gelement(z)}\mone^\perp(a/z)\monebarinv^{\perp}(b/z)=B(a,\gelement;z)\monebarinv^\perp(b/z),\\
   &=\sum_{n\in \mathbb{Z}}\widetilde{B}_n(a,b;\gelement)z^n,
   \end{align}
   where 
   \begin{align}
 \widetilde B_n(a,b;\gelement)
   &=\sum_{i\geq0}b^iB_{n+i}(a,\gelement)\monebarinv_i^\perp\label{eq:twoparam}
\end{align}
and  the \emph{modified Hall--Littlewood operator}
 is
\begin{equation*}\label{eq:mHL}
     \widetilde B_{(n,\gelement)}
 =\widetilde B_n(-1,q;\gelement)
=\sum_{i\geq0}q^iB_{(n+i,\gelement)}\monebarinv_i^\perp.
\end{equation*}

\end{definition}

}

\subsection{The symmetric and noncommutative symmetric cases}
For $(\Sym,h)$, we have 
\[
 B_{(n,h)}=\sum_{i\geq0}(-1)^iL_{h_{n+i}}e_i^\perp,
 \qquad
 \widetilde B_{(n,h)}
 =\sum_{i\geq0}q^iB_{(n+i,h)}h_i^\perp.
\]
These are the classical Bernstein and modified Hall--Littlewood operators
in the conventions of \cite[Sections~2 and~4]{BBS}.

For $(\NSym,H)$, the monomial quasisymmetric dual basis gives
$\mone_i=M_{1^i}=F_{1^i}$ and $\monebarinv_i=F_{(i)}$. Hence
$$
 B_{(n,H)}=\sum_{i\geq0}(-1)^iL_{H_{n+i}}F_{1^i}^\perp,
 \qquad
 \widetilde{B}_{(n,H)}=\widetilde B_n(-1,q;H)
 =\sum_{i\geq0}q^iB_{(n+i,H)}F_{(i)}^\perp.
$$
These are the immaculate and modified Hall--Littlewood creation operators
of \cite{BBS}.

\subsection{Intertwining and associated Schur functions}\label{sec:schur}
\begin{theorem}\label{thm:intertwine}
For a free root $\gelement$, the universal morphism $J$ satisfies
$$
 J B_{(m,H)}=B_{(m,\gelement)}J,\qquad
 J\widetilde B_m( {a,b};H)=\widetilde B_m( {a,b};\gelement)J.
$$
These are identities on $\NSym$ with values in $A_{\gelement}$.
\end{theorem}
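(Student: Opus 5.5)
The plan is to reduce both identities to two elementary intertwining relations:
\[
 J L_{H_k}=L_{\gelement_k}J,
 \qquad
 J\,\phi^\perp=\bigl((J^{\gr})^{-1}\phi\bigr)^\perp J
 \quad\text{for the relevant functionals }\phi .
\]
The first relation is immediate, because $J$ is an algebra morphism with $J(H_k)=\gelement_k$.

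For the second, use that the root $\gelement$ is free. Then $J$ sends the basis $\{H_\alpha\}$ bijectively onto the basis $\{\gelement_\alpha\}$ of $A_\gelement$, so $J:\NSym\to A_\gelement$ is an isomorphism of graded Hopf algebras. In the commutative case it is not injective, and this case needs separate handling; see the last paragraph. Its graded dual $J^{\gr}:A_\gelement^{\gr}\to\QSym$ satisfies $\langle H_\alpha,J^{\gr}\dualg_\beta\rangle=\langle\gelement_\alpha,\dualg_\beta\rangle=\delta_{\alpha\beta}$. Hence $J^{\gr}(\dualg_\beta)=M_\beta$, and in particular
\[
 J^{\gr}(\mone_i)=M_{1^i}=F_{1^i},
 \qquad
 J^{\gr}(\monebarinv_i)=F_{(i)},
\]
using \eqref{eq:ftom} for the second equality. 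I would then verify the general fact that for a Hopf morphism $J$ and $g\in A_\gelement^{\gr}$,
\[
 g^\perp\circ J=J\circ(J^{\gr}g)^\perp .
\]
This is a one-line Sweedler computation:
\[
 g^\perp(J a)=\sum\langle J a_{(1)},g\rangle\,J a_{(2)}
 =J\Bigl(\sum\langle a_{(1)},J^{\gr}g\rangle\,a_{(2)}\Bigr),
\]
using $\Delta J=(J\otimes J)\Delta$. Applied to the functionals above, it gives $\mone_i^\perp J=JF_{1^i}^\perp$ and $\monebarinv_i^\perp J=JF_{(i)}^\perp$.

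With these in hand, the identities follow term by term. On $\NSym$,
\[
 J B_m(a,H)=\sum_i a^i J L_{H_{m+i}}F_{1^i}^\perp
 =\sum_i a^i L_{\gelement_{m+i}}\mone_i^\perp J
 =B_m(a,\gelement)J .
\]
All sums are finite on any homogeneous element, since $F_{1^i}^\perp$ kills degree below $i$. Setting $a=-1$ gives the first identity. For the second, apply the same manipulation to \eqref{eq:twoparam}:
\[
 J\widetilde B_m(a,b;H)=\sum_i b^i J B_{m+i}(a,H)F_{(i)}^\perp
 =\sum_i b^i B_{m+i}(a,\gelement)J F_{(i)}^\perp
 =\sum_i b^i B_{m+i}(a,\gelement)\monebarinv_i^\perp J .
\]

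The main obstacle is making the dual identification precise. If $\cH$ is noncommutative this is clean. If $A_\gelement$ is commutative with free root, $J$ has a kernel and $J^{\gr}$ is only injective, with $J^{\gr}(\dualg_\lambda)=\sum_{\mathrm{sort}(\alpha)=\lambda}M_\alpha$. In that case I would not invert $J^{\gr}$. Instead I would check directly that $J^{\gr}(\mone_i)=M_{1^i}$, since only $\alpha=1^i$ sorts to $1^i$. I would also check that $J^{\gr}(\monebarinv_i)=\sum_{\lambda\vdash i}\sum_{\mathrm{sort}(\alpha)=\lambda}M_\alpha=\sum_{\alpha\models i}M_\alpha=F_{(i)}$, using \eqref{eq:ftom1}. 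With these two checks, the same intertwining relation $g^\perp J=J(J^{\gr}g)^\perp$ suffices, and this relation uses only that $J$ is a Hopf morphism.
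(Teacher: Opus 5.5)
Your proof is correct. It shares the paper's overall structure: reduce to $JL_{H_k}=L_{\gelement_k}J$ and to intertwining of the lowering operators, then combine term by term. The difference is in how you get the key relations $JF_{1^i}^\perp=\mone_i^\perp J$ and $JF_{(i)}^\perp=\monebarinv_i^\perp J$.

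The paper works in coordinates. It expands $\Delta(\gelement_\alpha)$ as the product of the divided-power coproducts of the factors, and pairs the left tensor factor with $\dualg_{1^i}$ and with $\monebarinv_i=\sum_\beta\dualg_\beta$. This gives the explicit lowering formulas \eqref{eq:flowering}: $\mone_i^\perp(\gelement_\alpha)=\sum_{\epsilon\in\{0,1\}^r,\,|\epsilon|=i}\gelement_{\alpha-\epsilon}$ and $\monebarinv_i^\perp(\gelement_\alpha)=\sum_{\beta\in\mathbb Z_{\geq0}^r,\,|\beta|=i}\gelement_{\alpha-\beta}$. The same formulas hold verbatim for $H$ in $\NSym$, so applying $J$ gives the intertwining.

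You instead use the coordinate-free adjunction $g^\perp J=J(J^{\gr}g)^\perp$, valid for any coalgebra morphism $J$. That reduces the problem to identifying $J^{\gr}(\mone_i)=M_{1^i}$ and $J^{\gr}(\monebarinv_i)=F_{(i)}$. The paper states this identification in passing in Section~\ref{sec:rooted} but does not use it in this proof.

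Your route is shorter and shows the intertwining holds for any functional on $A_\gelement$ once its pullback under $J^{\gr}$ is known. It also treats the commutative case, where $J$ is not injective, uniformly through your two direct checks on $J^{\gr}$, without ever inverting $J^{\gr}$. The paper's route costs a bit more bookkeeping but yields explicit formulas for the lowering operators on the basis $\{\gelement_\alpha\}$. Those formulas are what one uses in hand computations such as Example~\ref{ex:learncreation}.
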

\begin{proof} First we show that 
\[
 J F_{1^i}^\perp=\mone_i^\perp J,
 \qquad J F_{(i)}^\perp=\monebarinv_i^\perp J.
\]
Fix a composition $\alpha=(\alpha_1,\ldots,\alpha_r)$. We have 
\begin{equation*}
\Delta(\gelement_\alpha)=\Delta(\gelement_{\alpha_1})\cdots \Delta(\gelement_{\alpha_r}), \quad \Delta(\gelement_n)=\sum_{i+j=n}\gelement_{i}\otimes  \gelement_j.
\end{equation*}
Thus 
\begin{equation*}\label{eq:expandedproductcoproduct}
 \Delta(\gelement_\alpha)
 =\sum_{\substack{0\leq t_j\leq\alpha_j\\1\leq j\leq r}}
   (\gelement_{t_1}\cdots\gelement_{t_r})
   \otimes(\gelement_{\alpha_1-t_1}\cdots
                    \gelement_{\alpha_r-t_r}).
\end{equation*}
Note that $\mone_i=\dualg_{1^i}$, and by Equation \eqref{eq:dualnotation}, we have
\begin{equation*}
 \mone_i^\perp(\gelement_\alpha)= \dualg_{1^i}^\perp(\gelement_\alpha)=\sum_{\substack{0\leq t_j\leq\alpha_j\\1\leq j\leq r}}
   \langle \gelement_{t_1}\cdots\gelement_{t_r},\dualg_{1^i}\rangle 
   (\gelement_{\alpha_1-t_1}\cdots
                    \gelement_{\alpha_r-t_r})=\sum_{\substack{\epsilon\in\{0,1\}^r\\|\epsilon|=i}}
                 \gelement_{\alpha-\epsilon}.
 \end{equation*}
By Equations \eqref{eq:ftom} and \eqref{eq:ftom1}, we have
\begin{equation*}
 \monebarinv_i^\perp(\gelement_\alpha)=\sum_{\substack{0\leq t_j\leq\alpha_j\\1\leq j\leq r}}
   \langle \gelement_{t_1}\cdots\gelement_{t_r},\monebarinv_i \rangle
   (\gelement_{\alpha_1-t_1}\cdots
                    \gelement_{\alpha_r-t_r})=\sum_{\substack{\beta\in\mathbb Z_{\geq0}^r\\|\beta|=i}}
                      \gelement_{\alpha-\beta}.
 \end{equation*}
Therefore, 
\begin{equation}\label{eq:flowering}
\mone_i^\perp(\gelement_\alpha)= \sum_{\substack{\epsilon\in\{0,1\}^r\\|\epsilon|=i}}
                      \gelement_{\alpha-\epsilon}, \quad \quad 
 \monebarinv_i^\perp(\gelement_\alpha)
 =\sum_{\substack{\beta\in\mathbb Z_{\geq0}^r\\|\beta|=i}}
                      \gelement_{\alpha-\beta}.
\end{equation}
Since $J(H_n)=\gelement_n$, comparing Equation \eqref{eq:flowering} with its specialization to $\NSym$, we have
\[
 J F_{1^i}^\perp=\mone_i^\perp J,
 \qquad J F_{(i)}^\perp=\monebarinv_i^\perp J.
\]
Also $J L_{H_k}(H_\alpha)= \gelement_k \gelement_\alpha=L_{\gelement_k}J(H_\alpha)$, and so $J L_{H_k}=L_{\gelement_k}J.$ Therefore, 
\begin{align*}
 JB_{(m,H)}(H_\alpha)
 &=\sum_{i\geq0}(-1)^iJ\bigl(H_{m+i}F_{1^i}^\perp(H_\alpha)\bigr)\\
 &=\sum_{i\geq0}(-1)^i\gelement_{m+i}
                    \mone_i^\perp(\gelement_\alpha)
 =B_{(m,\gelement)}J(H_\alpha).
\end{align*}

Expanding Equation \eqref{eq:twoparam} gives
$$
 \widetilde B_m({a,b};\gelement)
 =\sum_{i,j\geq0}{a^i b^j}
L_{\gelement_{m+i+j}}\mone_i^\perp\monebarinv_j^\perp.
$$ Thus, 
\begin{align*}
 J\widetilde{B}_{m}({a,b};H)(H_\alpha)
 &=\sum_{i,j\geq0}{a^i b^j}
J(L_{H_{m+i+j}}F_{1^i}^\perp F_{(j)}^\perp(H_\alpha))\\
 &=\sum_{i,j\geq0}{a^i b^j}
L_{\gelement_{m+i+j}}\mone_i^\perp\monebarinv_j^\perp J(H_\alpha)= \widetilde{B}_m({a,b};\gelement)J(H_\alpha).
\end{align*}
\end{proof}

\begin{definition}
For a free root $\gelement$ and a composition $\alpha=(\alpha_1,\ldots,\alpha_r)$,
 the \emph{Schur function associated with $\gelement$} is
\begin{equation*}\label{eq:Schur}
 S_{(\alpha,\gelement)}
 =B_{(\alpha_1,\gelement)}\circ \cdots \circ B_{(\alpha_r,\gelement)}(1){,}
\end{equation*}
and the  \emph{modified Hall--Littlewood function associated with $\gelement$} is
\begin{equation*}\label{eq:tildeSiteration}
 \widetilde S_{(\alpha,\gelement)}(q)
 =\widetilde B_{\alpha_1}(-1,q;\gelement)\circ \cdots
   \circ \widetilde B_{\alpha_r}(-1,q;\gelement)(1).
\end{equation*}

\end{definition}

For a matrix with noncommuting entries, we use the row-ordered determinant
\[
 \det[a_{ij}]_{i,j=1}^r
 =\sum_{\sigma\in\mathfrak S_r}\sgn(\sigma)
       a_{1,\sigma(1)}\cdots a_{r,\sigma(r)}.
\]

\begin{theorem}\label{thm:JT}
For a free root $\gelement$ and every composition $\alpha=(\alpha_1,\dots,\alpha_r)$,
\begin{equation*}\label{eq:JT}
 S_{(\alpha,\gelement)}
 =\det[\gelement_{\alpha_i-i+j}]_{i,j=1}^r
 =\sum_{\sigma\in\mathfrak S_r}\sgn(\sigma)
       \prod_{i=1}^r\gelement_{\alpha_i+\sigma(i)-i},
\end{equation*}
where the product is in increasing row order.
For a partition $\lambda$, $S_{(\lambda,h)}=s_\lambda$ in $\Sym$.
For a composition $\alpha$, $S_{(\alpha,H)}$ is the immaculate function
of \cite{BBS}.
\end{theorem}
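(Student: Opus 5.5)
We prove the determinant formula by induction on $r$. The case $r=0$ is trivial: both sides equal $1$. For the inductive step, write $\alpha=(\alpha_1,\alpha')$ with $\alpha'=(\alpha_2,\dots,\alpha_r)$. By induction, $S_{(\alpha',\gelement)}$ is the row-ordered determinant of size $r-1$. Expanding it, we get
\[
S_{(\alpha',\gelement)}=\sum_{\tau}\sgn(\tau)\,\gelement_{\gamma(\tau)},
\]
where each $\gamma(\tau)$ is an integer tuple of length $r-1$. It therefore suffices to compute $B_{(\alpha_1,\gelement)}(\gelement_\gamma)$ for an arbitrary integer tuple $\gamma$ (entries may be nonpositive, with the conventions $\gelement_0=1$, $\gelement_d=0$ for $d<0$).

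\emph{Step 1: a formula for the Bernstein operator on products.} Equation~\eqref{eq:flowering} gives $\mone_i^\perp(\gelement_\gamma)=\sum_{\epsilon\in\{0,1\}^{r-1},\,|\epsilon|=i}\gelement_{\gamma-\epsilon}$. The derivation was stated for compositions, but it only used $\Delta(\gelement_n)=\sum\gelement_i\otimes\gelement_{n-i}$ and the duality \eqref{eq:dualnotation}. When some entries of $\gamma$ are zero or negative, the corresponding factors are $1$ or $0$, so the formula persists termwise (the zero factor case is trivial; for a zero entry, the only choice is $\epsilon_j=0$, and $\gelement_{-1}=0$ enforces this automatically). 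Hence
\[
B_{(n,\gelement)}(\gelement_\gamma)=\sum_{\epsilon\in\{0,1\}^{r-1}}(-1)^{|\epsilon|}\,\gelement_{n+|\epsilon|}\,\gelement_{\gamma-\epsilon}.
\]
This depends only on formal manipulations of the symbols $\gelement_k$, so it is the image under $J$ of the same identity in $\NSym$. Theorem~\ref{thm:intertwine} lets us work there if convenient.

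\emph{Step 2: the determinant identity.} We must show
\[
\sum_{\epsilon}(-1)^{|\epsilon|}\,\gelement_{\alpha_1+|\epsilon|}\det[\gelement_{\alpha_{i}-\epsilon_{i}-i+j}]_{i,j=2}^{r}=\det[\gelement_{\alpha_i-i+j}]_{i,j=1}^r.
\]
Here the inner determinant's rows are indexed $2,\dots,r$, with the shift by $\epsilon$ applied row by row, and its columns are reindexed to match the $(r-1)$-size determinant. This is the main obstacle. We plan to prove it as a formal identity in the free algebra on symbols $x_k$ ($k\in\mathbb Z$, with $x_0=1$ and $x_{<0}=0$).

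On the right, we expand along the first row: $\det=\sum_{j}(-1)^{j-1}x_{\alpha_1-1+j}\,M_{1j}$. Row-ordered expansion along the first row is valid with noncommuting entries, since the first-row factor comes first in every product.

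On the left, we group by $k=|\epsilon|$. The sum over $\epsilon$ with $|\epsilon|=k$ of the row-shifted $(r-1)$-determinant should equal the minor $M_{1,k+1}$. Indeed, $M_{1,k+1}$ deletes column $k+1$, so rows $2,\dots,r$ keep columns $1,\dots,k,k+2,\dots$. Relative to the leftmost $(r-1)$-column block, this means the first $k$ columns are shifted by $0$ and the rest are shifted by one. Multilinearity in columns (commutative-style) is not available here. Instead, we use multilinearity in rows, which holds for row-ordered determinants, together with a lattice-path / Lindström-type bijection, or a direct sign-reversing involution on pairs $(\epsilon,\sigma)$.

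If this becomes messy, the fallback is to first prove the statement for $(\NSym,H)$, where the immaculate Jacobi--Trudi formula $\mathfrak S_\alpha=\det[H_{\alpha_i-i+j}]$ with $\mathfrak S_\alpha$ defined via the operators $B_{(m,H)}$ is established in \cite{BBS}. We then apply $J$, using $J B_{(m,H)}=B_{(m,\gelement)}J$ from Theorem~\ref{thm:intertwine} and $J(1)=1$, to obtain $S_{(\alpha,\gelement)}=J(\det[H_{\alpha_i-i+j}])=\det[\gelement_{\alpha_i-i+j}]$. This reduction is clean, and we would in fact adopt it as the main line, keeping the direct involution as the self-contained alternative.

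Finally, the specializations follow from the main formula. For $(\Sym,h)$ with $\lambda$ a partition, the determinant is the classical Jacobi--Trudi formula for $s_\lambda$ \cite{Mac}. For $(\NSym,H)$, $S_{(\alpha,H)}$ is by construction the iterate of the operators $B_{(m,H)}$, which are the immaculate creation operators of \cite{BBS}, so $S_{(\alpha,H)}$ is the immaculate function.
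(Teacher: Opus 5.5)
Your adopted main line is exactly the paper's argument: use the intertwining $J B_{(m,H)}=B_{(m,\gelement)}J$ and $J(1)=1$ to get $S_{(\alpha,\gelement)}=J(S_{(\alpha,H)})$, then push the immaculate Jacobi--Trudi determinant of \cite{BBS} through the algebra morphism $J$, and obtain the $\Sym$ case from the classical Jacobi--Trudi formula (the paper cites \cite{BBS} for that case). The direct sign-reversing-involution route in your Step 2 is left unfinished, but it is not needed for the proof.
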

\begin{proof}
Consider that 
$$J(S_{(\alpha,H)})=S_{(\alpha, \gelement)}.$$ Since $S_{(\alpha,H)}$ is the immaculate function of \cite{BBS}, the rest 
 follows from \cite[Theorems~2.3 and~3.27]{BBS}.
\end{proof}

\section{Universal creation operators and Macdonald polynomials}\label{subsec:seriess}
Let $(\cH,\gelement)$ be a rooted Hopf algebra. For each integer tuple $\alpha=(\alpha_1,\dots,\alpha_r)$, let  $\gelement_\alpha=\gelement_{\alpha_1}\dots\gelement_{\alpha_r}$ where $\gelement_{0}=1$ and $\gelement_{n}=0$ for $n<0$. 

 For an integer tuple  
$\alpha=(\alpha_1,\ldots,\alpha_r)$,
$$R_{ij}(\alpha)=(\alpha_1,\ldots,\alpha_i+1,\ldots,\alpha_j-1,\ldots,\alpha_r).$$  {For nonnegative integers $\lambda_1,\lambda_2,\dots,\lambda_k$, define}
$$\left(R_{i_1j_1}^{\lambda_1}R_{i_2j_2}^{\lambda_2}\cdots R_{i_kj_k}^{\lambda_k}\right)\gelement_\alpha=\gelement_{R_{i_1j_1}^{\lambda_1}R_{i_2j_2}^{\lambda_2}\cdots R_{i_kj_k}^{\lambda_k}(\alpha)}.$$
 {The shifts act on the integer tuple before the ordered product is
evaluated. We extend scalars to a commutative $\mathbb C$-algebra $K$
containing the coefficients of the raising series; rooted morphisms are
extended $K$-linearly.}

Let $C(v)=\sum_{k\geq0}c_kv^k\in K[[v]]$ with $c_0=1$. In $C(v)$, $c_k$ may be treated
formally, so the coefficients may lie in a polynomial or rational
function ring.   For an integer tuple  
$\alpha=(\alpha_1,\ldots,\alpha_r)$, define
\begin{equation}\label{eq:seriesdefinition}
\Ue_\alpha(C;\gelement)=\left(\prod_{1\leq i<j\leq r}C(R_{ij})\right)\gelement_\alpha,
\end{equation} where  $$R_{ij}(\alpha)=(\alpha_1,\ldots,\alpha_i+1,\ldots,\alpha_j-1,\ldots,\alpha_r).$$ 
By convention, $\Ue_\varnothing(C;\gelement)=1$.  For each
$1\leq i<j\leq r$,  {let} 
\begin{equation*}\label{eq:CRij}
C(R_{ij})=\sum_{k_{ij}\geq 0}c_{k_{ij}}R_{ij}^{k_{ij}}.
\end{equation*}
\begin{theorem}\label{prop:WCalpha}
    Let $\gelement$ be a root and let $\alpha=(\alpha_1,\dots, \alpha_r)$ be an  {integer tuple}. Then
\begin{equation}\label{eq:seriesexpansion}
 \Ue_\alpha(C;\gelement)
 =\sum
       \left(\prod_{i<j}c_{k_{ij}}\right)
       \gelement_{\beta_1}\cdots\gelement_{\beta_r}{,}
\end{equation}
where the sum runs over all pairs $$((k_{ij})_{1\leq i<j\leq r}, (\beta_1,\dots, \beta_r))$$ such that $k_{ij}\geq 0$ for $1\leq i<j\leq r$, $\beta_i\geq 0$ for $1\leq i\leq r$, and 
\[
 \beta_i=\alpha_i+\sum_{j>i}k_{ij}-\sum_{j<i}k_{ji}.
\]
Moreover, if \(\Psi : (\mathcal{H}, \gelement) \to (\mathcal{H}', \gelement')\) is a rooted morphism, then
\[
\Psi(\Ue_\alpha(C;\gelement)) = \Ue_\alpha(C;\gelement').
\]
\end{theorem}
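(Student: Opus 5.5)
The plan is to expand the product of raising series in Equation~\eqref{eq:seriesdefinition} directly and then track the effect of the combined shift on the integer tuple. Write each factor as $C(R_{ij})=\sum_{k_{ij}\geq0}c_{k_{ij}}R_{ij}^{k_{ij}}$. Multiplying out the finite product over the $\binom r2$ pairs $i<j$ gives a formal sum over all families $(k_{ij})_{1\leq i<j\leq r}$ of nonnegative integers. The coefficient of each family is $\prod_{i<j}c_{k_{ij}}$, and its operator is $\prod_{i<j}R_{ij}^{k_{ij}}$. By the convention that shifts act on the integer tuple before the ordered product is evaluated, this operator sends $\gelement_\alpha$ to $\gelement_{\gamma}$, where $\gamma=\prod_{i<j}R_{ij}^{k_{ij}}(\alpha)$.

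Next I compute $\gamma$. Each $R_{ij}$ adds $\epsilon_i-\epsilon_j$, and these translations commute, so the order of the factors is irrelevant:
\[
\gamma=\alpha+\sum_{i<j}k_{ij}(\epsilon_i-\epsilon_j).
\]
Reading off the $i$th coordinate gives $\gamma_i=\alpha_i+\sum_{j>i}k_{ij}-\sum_{j<i}k_{ji}$, which is the formula for $\beta_i$ in the statement. By the conventions $\gelement_0=1$ and $\gelement_d=0$ for $d<0$, every term with some $\gamma_i<0$ vanishes. Discarding those terms leaves exactly the index set described, with summand $\left(\prod_{i<j}c_{k_{ij}}\right)\gelement_{\beta_1}\cdots\gelement_{\beta_r}$. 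This proves Equation~\eqref{eq:seriesexpansion}.

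The step needing care is that this expansion is a well-defined element of $\hH\otimes K$ (or of $\cH\otimes K$), that is, that only finitely many terms land in each degree. The total degree $\sum_i\beta_i=\sum_i\alpha_i$ is fixed, so every surviving term is homogeneous of degree $|\alpha|$ and the $\beta$'s range over a finite set. The $k_{ij}$ are then bounded in terms of $\beta$ and $\alpha$. To see this, induct on $i$: the equation for $\beta_1$ gives $\sum_{j>1}k_{1j}=\beta_1-\alpha_1\leq|\alpha|-\alpha_1$. Assuming the $k_{i'j}$ with $i'<i$ are already bounded, the equation for $\beta_i$ bounds $\sum_{j>i}k_{ij}$. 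Hence the sum is finite and no convergence issue arises. I expect this finiteness bookkeeping to be the main point to state carefully; the rest is formal.

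For naturality, let $\Psi$ be a rooted morphism, extended $K$-linearly. Since $\Psi$ is an algebra morphism with $\Psi(\gelement_n)=\gelement'_n$, and since $\Psi(1)=1$ and $\Psi(0)=0$ respect the conventions for nonpositive indices, we get $\Psi(\gelement_{\beta_1}\cdots\gelement_{\beta_r})=\gelement'_{\beta_1}\cdots\gelement'_{\beta_r}$. Applying $\Psi$ termwise to the finite sum \eqref{eq:seriesexpansion} produces the same expansion with $\gelement'$ in place of $\gelement$, and that expansion is $\Ue_\alpha(C;\gelement')$ by the first part.
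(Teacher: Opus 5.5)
Your proposal is correct and follows essentially the paper's route: expand $\prod_{i<j}C(R_{ij})$, use that the shifts commute to read off $\beta$, discard the terms with a negative index, and get naturality because $\Psi$ is an algebra morphism with $\Psi(\gelement_n)=\gelement'_n$. The one difference is the finiteness check. The paper uses the weighted identity $\sum_{i<j}(j-i)k_{ij}=\sum_i(i-1)(\alpha_i-\beta_i)\leq\sum_i(i-1)\alpha_i$, while you use conservation of $\sum_i\beta_i$ together with a row-by-row induction bounding $\sum_{j>i}k_{ij}$; both bounds are valid.
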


    \begin{proof}
Since the operators $R_{ij}$ commute on tuples, we obtain
\[
 \prod_{i<j}C(R_{ij})
 =\sum_{(k_{ij}\geq 0)_{i<j}}
       \Bigl(\prod_{i<j}c_{k_{ij}}\Bigr)
       \prod_{i<j}R_{ij}^{k_{ij}}.
\]
Applying $\prod_{i<j}R_{ij}^{k_{ij}}$ to the tuple $\alpha$ gives the
tuple $\beta=(\beta_1,\ldots,\beta_r)$ with
\[
\alpha_i=\beta_i-\sum_{j>i}k_{ij}+\sum_{j<i}k_{ji}.
\]
We have that 
\[
\sum_{i=1}^r(i-1)\alpha_i
=\sum_{i=1}^r(i-1)\beta_i
-\sum_{i=1}^r(i-1)\sum_{j>i}k_{ij}
+\sum_{i=1}^r(i-1)\sum_{j<i}k_{ji}.
\]
Since 
\[
\sum_{i=1}^r(i-1)\sum_{j>i}k_{ij}
=\sum_{i<j}(i-1)k_{ij}
\qquad \text{and}\qquad
\sum_{i=1}^r(i-1)\sum_{j<i}k_{ji}
=\sum_{i<j}(j-1)k_{ij},
\]
we get
\[
\sum_{i=1}^r(i-1)\alpha_i
=\sum_{i=1}^r(i-1)\beta_i
-\sum_{i<j}(i-1)k_{ij}
+\sum_{i<j}(j-1)k_{ij}.
\]
The two pair sums combine into \(\sum_{i<j}(j-i)k_{ij}\), so
\[
\sum_{i<j}(j-i)k_{ij}
=\sum_{i=1}^r(i-1)\alpha_i-\sum_{i=1}^r(i-1)\beta_i.
\]
 {If all $\beta_i\geq0$, then}
{ 
\[
0\leq\sum_{i<j}(j-i)k_{ij}
=\sum_{i=1}^r(i-1)\alpha_i-\sum_{i=1}^r(i-1)\beta_i
\leq\sum_{i=1}^r(i-1)\alpha_i.
\]
}
 {Thus only finitely many arrays $(k_{ij})$ contribute, for every
integer tuple $\alpha$. The coefficients are therefore polynomials in
finitely many coefficients of the raising series.}
Evaluating the ordered product
$\gelement_{\beta_1}\cdots\gelement_{\beta_r}$ yields
\eqref{eq:seriesexpansion}.

For a composition $\alpha$,  {the choice $k_{ij}=0$ for all
$i<j$ in \eqref{eq:seriesexpansion}} gives $\beta=\alpha$ and contributes
$\gelement_\alpha$.  {If some $k_{ij}>0$, choose the least $i$ for
which $k_{ij}>0$ for some $j>i$. Then $\beta_i>\alpha_i$, while
$\beta_1,\ldots,\beta_{i-1}$ remain unchanged.} After deleting zero
parts, the resulting composition is lexicographically larger than
$\alpha$. Hence
\begin{equation*}\label{eq:seriestriangular}
\Ue_\alpha(C;\gelement)
 =\gelement_\alpha+
       \sum_{\beta>_{\mathrm{lex}}\alpha}a_{\alpha\beta}\gelement_\beta.
\end{equation*}

Since $\Psi(\gelement_n)=\gelement'_n$ and $\Psi$ is an algebra morphism, we have that 
\[
\Ue_\alpha(C;\gelement')=\Psi(\Ue_\alpha(C;\gelement)).
\]
\end{proof}

\begin{example}\label{ex:series22}
For two nonnegative parts $a,b$ the only raising operator is $R_{12}$,
so
\[
 \Ue_{(a,b)}(C;\gelement)
   =\sum_{k=0}^b c_k\gelement_{a+k}\gelement_{b-k}.
\]
In particular,
\begin{align*}
 \Ue_{(2,2)}(C;\gelement)
   &=\gelement_2\gelement_2+c_1\gelement_3\gelement_1+c_2\gelement_4,\\
 \Ue_{(2,2)}({1-v};\gelement)
   &=\gelement_2\gelement_2-\gelement_3\gelement_1,\\
 \Ue_{(2,2)}({(1-v)/(1-qv)};\gelement)
   &=\gelement_2\gelement_2+(q-1)\gelement_3\gelement_1
                                      +q(q-1)\gelement_4.
\end{align*}
The last equality uses
$C_q(v)=(1-v)/(1-qv)=1+\sum_{k\geq1}(q-1)q^{k-1}v^k$.
In $\Sym$ these are $s_{22}$ and $Q'_{22}(q)$.
In $\NSym$, they give immaculate and modified Hall--Littlewood functions.
\end{example}

\subsection{From creation operators to the modified Hall--Littlewood raising series}
Recall that $C_q(v)=(1-v)/(1-qv)$ and
$\widetilde B_{(m,\gelement)}=\widetilde B_m(-1,q;\gelement)$.
We show that applying these creation operators successively is the same
as applying the raising operators in the definition of $\Ue_\alpha({C_q};\gelement)$.  A denominator $(1-av)^{-1}$ always means the formal
series $\sum_{k\geq0}a^kv^k$.

\begin{proposition}\label{prop:normal}
For a free root $\gelement$, we have
\begin{equation}\label{eq:eL}
 \mone^\perp(u)L_{\gelement(z)}
   =(1+uz)L_{\gelement(z)}\mone^\perp(u),
   \end{equation} 
   and 
   \begin{equation}\label{eq:fL}
 \monebarinv^\perp(u)L_{\gelement(z)}
   =(1-uz)^{-1}L_{\gelement(z)}\monebarinv^\perp(u).
\end{equation} 
\end{proposition}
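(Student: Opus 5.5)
The plan is to use that $\mone$ and $\monebarinv$ are group-like in $\widehat{A_{\gelement}^{\gr}}$, so each $g\in\{\mone,\monebarinv\}$ satisfies $\Delta(g_i)=\sum_{p+q=i}g_p\otimes g_q$. Consequently $g_i^\perp$ acts on products by the twisted Leibniz rule
\[
 g_i^\perp(xy)=\sum_{p+q=i}g_p^\perp(x)\,g_q^\perp(y),
\]
which follows from $\langle g_i^\perp(xy),b\rangle=\langle xy,g_ib\rangle$ and the compatibility of product and coproduct. In generating-function form this says $g^\perp(u)$ is multiplicative: $g^\perp(u)L_x=L_{g^\perp(u)(x)}\,g^\perp(u)$. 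Hence
\[
 g^\perp(u)L_{\gelement(z)}=L_{g^\perp(u)(\gelement(z))}\,g^\perp(u),
\]
and both identities reduce to computing $g^\perp(u)$ applied to the single series $\gelement(z)$.

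For that computation I would use Equation~\eqref{eq:flowering} with $r=1$ and $\alpha=(n)$. For $\mone$ it gives $\mone_0^\perp\gelement_n=\gelement_n$, $\mone_1^\perp\gelement_n=\gelement_{n-1}$, and $\mone_i^\perp\gelement_n=0$ for $i\ge2$. Thus
\[
 \mone^\perp(u)\gelement(z)=\sum_n(\gelement_n+u\gelement_{n-1})z^n=(1+uz)\gelement(z).
\]
For $\monebarinv$ it gives $\monebarinv_i^\perp\gelement_n=\gelement_{n-i}$ for all $i\ge0$. Thus
\[
 \monebarinv^\perp(u)\gelement(z)=\sum_{i}u^iz^i\gelement(z)=(1-uz)^{-1}\gelement(z).
\]
Since these factors are scalars, substituting them yields \eqref{eq:eL} and \eqref{eq:fL}.

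The main obstacle is justifying the multiplicative rule on $A_{\gelement}$, where the functionals $\mone_i,\monebarinv_i$ are defined only through the dual basis. I would avoid any appeal to the group-likeness of $\mone$ and $\monebarinv$ by arguing directly on the basis $\{\gelement_\beta\}$. Equation~\eqref{eq:flowering} describes $\mone_i^\perp(\gelement_\alpha)$ as a sum over lowering each part by $0$ or $1$, with total lowering $i$. Multiplying by $\gelement_n$ on the left simply prepends the part $n$. The coefficient of $u^i$ in $\mone^\perp(u)(\gelement_n\gelement_\alpha)$ then splits according to whether the new first part is lowered, giving $\mone_i^\perp(\gelement_n\gelement_\alpha)=\gelement_n\mone_i^\perp(\gelement_\alpha)+\gelement_{n-1}\mone_{i-1}^\perp(\gelement_\alpha)$. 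The same splitting applies to $\monebarinv$, where the first part is lowered by an arbitrary $k\ge0$. Summing over $n$ with $z^n$ reproduces the scalar factors computed above.

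A few conventions need checking: that $\gelement_d=0$ for $d<0$ is consistent throughout, that in the commutative case the partition-indexed version of Equation~\eqref{eq:flowering} still holds, and that all identities are read coefficientwise in $u,z$ on $A_{\gelement}$, so that no convergence issues arise.
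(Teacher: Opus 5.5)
Your proposal is correct and follows the paper's proof: show that $g^\perp(u)$ is multiplicative on $A_{\gelement}$ for $g\in\{\mone,\monebarinv\}$, then compute $g^\perp(u)(\gelement(z))$ to obtain the scalar factors $(1+uz)$ and $(1-uz)^{-1}$. The paper gets multiplicativity by noting that $\langle -,g\rangle$ is multiplicative on the basis $\gelement_\alpha$ under concatenation, which is exactly the group-likeness you first invoke. Your fallback instead derives only the needed left-multiplication case from Equation~\eqref{eq:flowering}, and that works equally well.
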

\begin{proof}
Since the root is free, the ordered products $\gelement_\alpha$
form a basis of $A_\gelement$, with dual basis $\dualg_\alpha$. Recall that
\[
 \mone_n=\dualg_{1^n},\qquad
 \monebarinv_n=\sum_{\beta\models n}\dualg_\beta,
 \qquad \mone_0=\monebarinv_0=\eps.
\]
Thus, for every composition $\alpha$, including the empty one,
\[
 \langle\gelement_\alpha,\mone\rangle
 =\begin{cases}1,&\alpha=1^{\ell(\alpha)},\\0,&\text{otherwise},\end{cases}
 \qquad \text{and}\quad
 \langle\gelement_\alpha,\monebarinv\rangle=1.
\]
The product $\gelement_\alpha\gelement_\beta$ is indexed by the
concatenation of $\alpha$ and $\beta$. Hence each of the functionals
$\langle-,\mone\rangle$ and $\langle-,\monebarinv\rangle$ is
multiplicative on basis elements, and therefore on $A_{\gelement}$.

Let $g$ denote either $\mone$ or $\monebarinv$. Taking homogeneous
components of this multiplicativity gives, for $a,b\in A_{\gelement}$,
\[
 \langle ab,g_k\rangle
 =\sum_{i+j=k}\langle a,g_i\rangle\langle b,g_j\rangle.
\]
Since  $\Delta(ab)=\Delta(a)\Delta(b)$, we obtain
\begin{align*}
 g_k^\perp(ab)
 &=\sum_{(a),(b)}\langle a_{(1)}b_{(1)},g_k\rangle
                                      a_{(2)}b_{(2)}\\
 &=\sum_{i+j=k}
   \left(\sum_{(a)}\langle a_{(1)},g_i\rangle a_{(2)}\right)
   \left(\sum_{(b)}\langle b_{(1)},g_j\rangle b_{(2)}\right)\\
 &=\sum_{i+j=k}g_i^\perp(a)g_j^\perp(b).
\end{align*}
For $a,b\in\widehat A_\gelement$, this identity still holds in each degree
$m$. Consequently, as a formal power series in $u$,
\[
 g^\perp(u)(ab)=g^\perp(u)(a)g^\perp(u)(b)
       \qquad(a,b\in\widehat A_{\gelement}).
\]

Next, since
$\Delta(\gelement_n)=\sum_{i=0}^n
\gelement_i\otimes\gelement_{n-i}$, we have 
\[
 g_i^\perp(\gelement_n)
   =\langle\gelement_i,g_i\rangle\gelement_{n-i}
       \quad(0\leq i\leq n),
 \qquad g_i^\perp(\gelement_n)=0\quad(i>n).
\]
For independent indeterminates $u,z$, it follows that
\begin{align*}
 g^\perp(u)(\gelement(z))
 &=\sum_{n\geq0}\sum_{i=0}^n
       \langle\gelement_i,g_i\rangle\gelement_{n-i}u^i z^n\\
 &=\left(\sum_{i\geq0}\langle\gelement_i,g_i\rangle(uz)^i\right)
       \gelement(z).
\end{align*}
Duality gives
\[
 \langle\gelement_i,\mone_i\rangle
 =\begin{cases}1,&i=0,1,\\0,&i\geq2,\end{cases}
 \qquad
 \langle\gelement_i,\monebarinv_i\rangle=1\quad(i\geq0).
\]
Therefore
\[
 \mone^\perp(u)(\gelement(z))=(1+uz)\gelement(z),
 \qquad
 \monebarinv^\perp(u)(\gelement(z))
       =(1-uz)^{-1}\gelement(z),
\]
where $(1-uz)^{-1}=\sum_{i\geq0}(uz)^i$.
\end{proof}

\begin{theorem} For a free root $\gelement$ and every integer tuple
$\alpha=(\alpha_1,\ldots,\alpha_r)$,
 \begin{equation}\label{eq:seriescreation}
 \widetilde B_{\alpha_1}{(a,b;\gelement)}\cdots
 \widetilde B_{\alpha_r}{(a,b;\gelement)}(1)
 =\Ue_\alpha((1+av)/(1-bv);\gelement).
\end{equation}
The empty tuple gives $1$ on both sides.
\end{theorem}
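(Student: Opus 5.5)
We work with generating series. Let $z_1,\dots,z_r$ be independent indeterminates. By the definition of $\widetilde B(a,b;\gelement;z)$, the left side of \eqref{eq:seriescreation} is the coefficient of $z_1^{\alpha_1}\cdots z_r^{\alpha_r}$ in
\[
 \widetilde B(a,b;\gelement;z_1)\cdots\widetilde B(a,b;\gelement;z_r)(1),
 \qquad
 \widetilde B(a,b;\gelement;z)=L_{\gelement(z)}\mone^\perp(a/z)\monebarinv^\perp(b/z).
\]
Here $a/z$ and $b/z$ are substituted for $u$ in the formal series of the skew operators. The plan is to move every skew operator to the right, where it acts on $1$. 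Since $\mone_i^\perp(1)=\monebarinv_i^\perp(1)=0$ for $i>0$, each skew operator then acts as the identity on $1$.

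To commute the operators past one another we use Proposition~\ref{prop:normal}. Substituting $u=a/z_i$ and $z=z_j$ in \eqref{eq:eL}, and $u=b/z_i$, $z=z_j$ in \eqref{eq:fL}, gives for $i<j$
\[
 \mone^\perp(a/z_i)\monebarinv^\perp(b/z_i)L_{\gelement(z_j)}
 =\frac{1+a z_j/z_i}{1-b z_j/z_i}\,L_{\gelement(z_j)}\mone^\perp(a/z_i)\monebarinv^\perp(b/z_i).
\]
Moving the skew operators from the leftmost factor outward, one at a time, gives
\[
 \prod_{1\le i<j\le r}C(z_j/z_i)\;\gelement(z_1)\gelement(z_2)\cdots\gelement(z_r),\qquad C(v)=\frac{1+av}{1-bv}.
\]
The ordered product $\gelement(z_1)\cdots\gelement(z_r)$ has coefficient $\gelement_\beta$ at $z^\beta$. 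Multiplying by $(z_j/z_i)^{k}$ shifts this coefficient exactly as $R_{ij}^{k}$ does. So the coefficient of $z^\alpha$ is $\sum \prod c_{k_{ij}}\gelement_{\beta}$ with $\beta=\prod R_{ij}^{k_{ij}}\alpha$, and only $\beta$ with nonnegative entries contribute. By Theorem~\ref{prop:WCalpha} this is $\Ue_\alpha(C;\gelement)$. Equation \eqref{eq:seriesexpansion} handles arbitrary integer tuples, including the conventions $\gelement_0=1$ and $\gelement_d=0$ for $d<0$.

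The main obstacle is making the formal manipulations legitimate. The products involve series in both $z_i$ and $z_i^{-1}$, so we must check that each coefficient extraction is a finite sum. First I would work coefficientwise, in a fixed degree of $\widehat{A_\gelement}$. Once all skew operators act on $1$, only finitely many terms survive in each monomial. Second, the expansion of $C(z_j/z_i)$ for $i<j$ must be taken in nonnegative powers of $z_j/z_i$, which matches the direction of $R_{ij}$. The finiteness argument already given in the proof of Theorem~\ref{prop:WCalpha}, which bounds $\sum(j-i)k_{ij}$, shows that only finitely many terms contribute to each coefficient.

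If the Laurent-series bookkeeping becomes awkward, I would instead induct on $r$ at the level of modes. The inductive step uses the identity $\mone_i^\perp\monebarinv_j^\perp(\gelement_{\gamma})$ from \eqref{eq:flowering} together with the product rule $g_k^\perp(xy)=\sum_{i+j=k} g_i^\perp(x)g_j^\perp(y)$ from the proof of Proposition~\ref{prop:normal}. These show that applying $\widetilde B_{\alpha_1}$ to $\Ue_{(\alpha_2,\dots,\alpha_r)}$ produces exactly the factors $C(R_{1j})$ for $j\ge 2$.
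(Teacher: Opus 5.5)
Your proposal is correct and follows essentially the same route as the paper. Like the paper, you form the product of the generating series $\widetilde B(a,b;\gelement;z_i)$ and use Proposition~\ref{prop:normal} to move each lowering factor past $L_{\gelement(z_j)}$, which produces one factor $C(z_j/z_i)$ with $C(v)=(1+av)/(1-bv)$ for every pair $i<j$; you then extract the coefficient of $z_1^{\alpha_1}\cdots z_r^{\alpha_r}$, identifying $(z_j/z_i)^k$ with $R_{ij}^k$. Your remarks on finiteness and your fallback mode-level induction via \eqref{eq:flowering} are more careful than the paper's sketch, but they do not change the argument.
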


\begin{proof}
By definition,
\[
  \widetilde B{(a,b;\gelement,z)}=\sum_{m\in\mathbb Z}\widetilde B_m{(a,b;\gelement)}z^m
 =L_{\gelement(z)}\mone^\perp(a/z)
                         \monebarinv^\perp(b/z).
\]
Write $D(z)=\mone^\perp(a/z)\monebarinv^\perp(b/z)$
for the two lowering factors. Proposition~\ref{prop:normal} gives
\[
 D(z_i)L_{\gelement(z_j)}
 =\frac{1+az_j/z_i}{1-bz_j/z_i}
       L_{\gelement(z_j)}D(z_i)
 =C(z_j/z_i)L_{\gelement(z_j)}D(z_i),
\]
where $C(z_j/z_i)=({1+az_j/z_i})/({1-bz_j/z_i})$.
Indeed, moving $\monebarinv^\perp(bz_i^{-1})$ past the multiplication
operator contributes $(1-bz_j/z_i)^{-1}$, and moving
$\mone^\perp(a/z_i)$ past it contributes $1+az_j/z_i$.

Note that $D(z_i)(1)=1$, because every positive-degree
lowering operator kills $1$. Thus,
\[
 \left(\sum_{m\in \mathbb{Z}}\widetilde B_m{(a,b;\gelement)}z_1^m\right)\left(\sum_{m\in \mathbb{Z}}\widetilde B_m{(a,b;\gelement)}z_2^m\right)\cdots
 \left(\sum_{m\in \mathbb{Z}}\widetilde B_{m}{(a,b;\gelement)}z_r^m\right)(1)
 \]
 \[
=L_{\gelement(z_1)}D(z_1)\cdots {D(z_{r-2})}L_{\gelement(z_{r-1})}D(z_{r-1}) L_{\gelement(z_r)}D(z_r)(1)
 \]
 \[
 =L_{\gelement(z_1)}D(z_1)\cdots D(z_{r-2})L_{\gelement(z_{r-1})}D(z_{r-1}) L_{\gelement(z_r)}(1)
\]
\[
=L_{\gelement(z_1)}D(z_1)\cdots D(z_{r-2})L_{\gelement(z_{r-1})}C(z_r/z_{r-1})L_{\gelement(z_{r})}D(z_{r-1})(1)
\]
\[
=C(z_r/z_{r-1})L_{\gelement(z_1)}D(z_1)\cdots D(z_{r-2})L_{\gelement(z_{r-1})}(1)\gelement(z_{r})
\]
\[
=C(z_r/z_{r-1})C(z_{r-1}/z_{r-2})L_{\gelement(z_1)}D(z_1)\cdots   D(z_{r-2})(1)\gelement(z_{r-1})\gelement(z_{r})
\]
Keep moving each $D(z_i)$ to the right past the later
multiplication operators. For each pair $i<j$ this contributes one
factor $C(z_j/z_i)$.  Therefore
\[
 \left(\sum_{m\in \mathbb{Z}}\widetilde B_m{(a,b;\gelement)}z_1^m\right) \cdots
 \left(\sum_{m\in \mathbb{Z}}\widetilde B_{m}{(a,b;\gelement)}z_r^m\right)(1)
 =\gelement(z_1)\cdots\gelement(z_r)
                 \prod_{i<j}C(z_j/z_i).
\]

Now take the coefficient of \(z_1^{\alpha_1} z_2^{\alpha_2} \cdots z_r^{\alpha_r}\) on both sides of
\[
\left(\sum_{m\in \mathbb{Z}}\widetilde B_m{(a,b;\gelement)}z_1^m\right) \cdots
 \left(\sum_{m\in \mathbb{Z}}\widetilde B_{m}{(a,b;\gelement)}z_r^m\right)(1)
= \gelement(z_1) \cdots \gelement(z_r) \prod_{1 \le i < j \le r} C(z_j/z_i).
\]
The left-hand side gives the  {element}
\[
\widetilde B{(a,b;\gelement)}\cdots
 \widetilde B{(a,b;\gelement)}(1)
\]
We  {now} consider the right-hand side. Note that
\[
[z_i^c z_j^d] \, (z_j/z_i)^k \, \gelement(z_i) \gelement(z_j) = \gelement_{c+k} \gelement_{d-k}.
\]
In other words, choosing the term \((z_j/z_i)^k\) in a raising series \(C(z_j/z_i)\) has exactly the effect of the raising operator
\[
R_{ij}^k : (\alpha_1,\ldots, \alpha_i, \ldots, \alpha_j, \ldots, \alpha_r) \mapsto (\alpha_1,\ldots, \alpha_i + k, \ldots, \alpha_j - k, \ldots,\alpha_r)
\]
on the indices.

So we expand each \(C(z_j/z_i)\) as a power series in \(z_j/z_i\),
\[
C(z_j/z_i) = \sum_{k \ge 0} c_k (z_j/z_i)^k,
\]
and multiply them before evaluating the complete factors. The resulting coefficient is precisely
\[
\left(\prod_{1 \le i < j \le r} C_q(R_{ij})\right) \, \gelement_\alpha = {\Ue}_\alpha({C};\gelement){.}
\]

\end{proof}

At $(a,b)=(-1,0)$, the operators become $B_{(m,\gelement)}$ and $C=1-v$.  {Thus}
{ 
\[
B_{(\alpha_1,\gelement)}\cdots B_{(\alpha_r,\gelement)}(1)
=\Ue_\alpha({1-v};\gelement),
\]
}
 {which equals  the associated Schur function $S_{(\alpha,\gelement)}$ when $\alpha$ is a composition.}
 At $(a,b)=(-1,q)$, the operators become $\widetilde B_{m}(-1,q;\gelement)$ and $C=(1-v)/(1-qv)$. Thus 
 $$\widetilde B_{\alpha_1}(-1,q;\gelement)\circ \cdots
   \circ \widetilde B_{\alpha_r}(-1,q;\gelement)(1)=\Ue_{\alpha}((1-v)/(1-qv);\gelement)$$
 which equals the  {modified Hall--Littlewood function associated with $\gelement$} $\widetilde S_{(\alpha,\gelement)}(q)$.

\begin{example}\label{ex:learncreation}
For a free root $\gelement$, $B_{(1,\gelement)}(1)=\gelement_1$.
Only the degree-zero and degree-one lowering terms can act
nontrivially on $\gelement_1$, and
$\mone_1^\perp(\gelement_1)=1$. Thus
\[
 B_{(1,\gelement)}B_{(1,\gelement)}(1)
 =\gelement_1^2-\gelement_2
 =\Ue_{(1,1)}({1-v};\gelement).
\]
 {Also}, consider that $\monebarinv_1^\perp(\gelement_1)=1$, so
\begin{align*}
 \widetilde B_1(-1,q;\gelement)(\gelement_1)
 &=B_{(1,\gelement)}(\gelement_1)
                           +qB_{(2,\gelement)}(1)\\
 &=\gelement_1^2+(q-1)\gelement_2.
\end{align*}
\end{example}

\subsection{Raising series attached to pairs of positions}\label{subsec:pairseriess}
Before we give the raising series for Macdonald  {polynomials}, we make a small extension of Equation~\eqref{eq:seriesdefinition}.
Fix an integer $r\geq0$ and a collection
$\mathbf C=(C_{ij})_{1\leq i<j\leq r}$, where
\[
 C_{ij}(v)=\sum_{k\geq0}c_{ij,k}v^k,\qquad c_{ij,0}=1.
\]
The coefficients are scalars; they may depend on formal parameters.
For an integer tuple $\alpha$ of length $r$, let
\begin{equation*}\label{eq:pairseries}
 \Ue_\alpha({\mathbf C};\gelement)
 =\left(\prod_{1\leq i<j\leq r}C_{ij}(R_{ij})\right)\gelement_\alpha.
\end{equation*}

\begin{proposition}
Let $\gelement$ be a root and let $\alpha=(\alpha_1,\dots, \alpha_r)$ be an  {integer tuple}.  
Then
\begin{equation}\label{eq:UniWp}
   \Ue_\alpha({\mathbf C};\gelement)
= \sum \left( \prod_{i<j} c_{ij,k_{ij}} \right) \gelement_{\beta_1} \cdots \gelement_{\beta_r},
\end{equation}
where the sum runs over all pairs $$((k_{ij})_{1\leq i<j\leq r}, (\beta_1,\dots, \beta_r))$$ such that $k_{ij}\geq 0$ for $1\leq i<j\leq r$, $\beta_i\geq 0$ for $1\leq i\leq r$, and 
\[
 \beta_i=\alpha_i+\sum_{j>i}k_{ij}-\sum_{j<i}k_{ji}.
\]
Moreover, if \(\Psi : (\mathcal{H}, \gelement) \to (\mathcal{H}', \gelement')\) is a rooted morphism, then
\[
\Psi\bigl(\Ue_\alpha({\mathbf C};\gelement)\bigr) = \Ue_\alpha({\mathbf C};\gelement').
\]
\end{proposition}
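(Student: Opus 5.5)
The plan is to repeat the argument of Theorem~\ref{prop:WCalpha}, the single-series case. The only change is that the coefficient attached to the pair $(i,j)$ is now $c_{ij,k}$ instead of $c_k$. The raising operators $R_{ij}$ act on integer tuples by translation: $R_{ij}$ adds $\epsilon_i-\epsilon_j$. Translations commute, so the product of the formal series $C_{ij}(R_{ij})=\sum_{k\geq0}c_{ij,k}R_{ij}^{k}$ expands as
\[
 \prod_{i<j}C_{ij}(R_{ij})=\sum_{(k_{ij})_{i<j}}\Bigl(\prod_{i<j}c_{ij,k_{ij}}\Bigr)\prod_{i<j}R_{ij}^{k_{ij}},
\]
and this expansion does not depend on the order of the factors.

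Applying $\prod_{i<j}R_{ij}^{k_{ij}}$ to $\alpha$ gives the tuple $\beta$ with $\beta_i=\alpha_i+\sum_{j>i}k_{ij}-\sum_{j<i}k_{ji}$. We then evaluate $\gelement_\beta$ using the conventions $\gelement_0=1$ and $\gelement_d=0$ for $d<0$. Every term with some $\beta_i<0$ vanishes, and the remaining terms are exactly those in \eqref{eq:UniWp}.

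The one point needing care is that the sum is well defined, that is, finite. I will use the same weighted identity as in the proof of Theorem~\ref{prop:WCalpha}:
\[
 \sum_{i<j}(j-i)k_{ij}=\sum_{i}(i-1)\alpha_i-\sum_i(i-1)\beta_i .
\]
When all $\beta_i\geq0$, this quantity is bounded by $\sum_i(i-1)\alpha_i$. Since $j-i\geq1$, only finitely many arrays $(k_{ij})$ contribute. So the expression is a finite sum in $\cH\otimes K$, and each coefficient involves only finitely many of the $c_{ij,k}$. This identity does not involve the coefficients at all, so it carries over unchanged.

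For naturality, note that a rooted morphism $\Psi$, extended $K$-linearly, is an algebra map with $\Psi(\gelement_n)=\gelement'_n$. It also fixes $1$ and $0$, so it respects the conventions for $\gelement_0$ and for negative indices. Applying $\Psi$ term by term to the finite sum \eqref{eq:UniWp} therefore gives the same sum with $\gelement'$ in place of $\gelement$, and that sum is $\Ue_\alpha(\mathbf C;\gelement')$.

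I expect no real obstacle here. The only subtle step is the finiteness argument, and it is inherited verbatim from Theorem~\ref{prop:WCalpha}.
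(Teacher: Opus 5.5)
Your proposal is correct and is essentially the paper's proof. The paper says only that the argument of Theorem~\ref{prop:WCalpha} carries over with $c_{k_{ij}}$ replaced by $c_{ij,k_{ij}}$: commuting shifts acting on tuples, the finiteness bound $\sum_{i<j}(j-i)k_{ij}\leq\sum_i(i-1)\alpha_i$ when all $\beta_i\geq0$, and termwise application of $\Psi$. You have written out exactly those steps, and they check out.
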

\begin{proof}
 {Expand each $C_{ij}(R_{ij})$ as a formal series. The proof of
Theorem~\ref{prop:WCalpha}, including its finiteness bound, applies
with $c_{k_{ij}}$ replaced by $c_{ij,k_{ij}}$. }
\end{proof}

\begin{example}\label{ex:learntransfers}
 {Let} $\alpha=(1,1,1)$. 
Let
\[
 A=[v]C_{12}(v),\quad A_2=[v^2]C_{12}(v),\quad
 B=[v]C_{13}(v),\quad D=[v]C_{23}(v).
\]
 Choosing powers
$(k_{12},k_{13},k_{23})$ produces the tuple
\[
 (1+k_{12}+k_{13},\ 1-k_{12}+k_{23},\ 1-k_{13}-k_{23}).
\]
Nonnegativity requires $k_{13}+k_{23}\leq1$ and
$k_{12}\leq1+k_{23}$. The complete list is
\[
\begin{array}{c|c|c|c}
(k_{12},k_{13},k_{23})&\text{final tuple}&\text{coefficient}&\text{product}\\\hline
(0,0,0)&(1,1,1)&1&\gelement_1^3\\
(1,0,0)&(2,0,1)&A&\gelement_2\gelement_1\\
(0,1,0)&(2,1,0)&B&\gelement_2\gelement_1\\
(1,1,0)&(3,0,0)&AB&\gelement_3\\
(0,0,1)&(1,2,0)&D&\gelement_1\gelement_2\\
(1,0,1)&(2,1,0)&AD&\gelement_2\gelement_1\\
(2,0,1)&(3,0,0)&A_2D&\gelement_3
\end{array}
\]
Several choices can give the same final product, so their
coefficients must be added. We obtain
\begin{equation}\label{eq:learnthreeformula}
\Ue_{(1,1,1)}({\mathbf C};\gelement)
 =\gelement_1^3+D\gelement_1\gelement_2
 +(A+B+AD)\gelement_2\gelement_1+(AB+A_2D)\gelement_3.
\end{equation}
There is no need for any coefficient of $C_{13}$ or $C_{23}$
beyond degree one, even if these raising series are infinite series.
For a uniform raising series $C=1-v$, substituting
$A=B=D=-1$ and $A_2=0$ gives
\[
 \Ue_{(1,1,1)}({1-v};\gelement)
 =\gelement_1^3-\gelement_1\gelement_2
                  -\gelement_2\gelement_1+\gelement_3.
\]
\end{example}

\subsection{The Macdonald diagram and its raising series}\label{subsec:Macdiagram}
Let $\beta=(\beta_1,\ldots,\beta_r)$ be a composition {of $n$}. Its column diagram is
$$
 \boldsymbol\beta=\{(k,l):1\leq k\leq r,\ 1\leq l\leq\beta_k\}.
$$

Order the boxes from top to bottom, and from left to right within each row, and call them $\boldsymbol{\beta}[1],\dots, \boldsymbol{\beta}[n]$. For boxes above the bottom row, if  $\boldsymbol{\beta}[i]=(k,l)$, let ${\rm south}(\boldsymbol{\beta}[i])=(k,l-1)$. 
 The leg of $\boldsymbol{\beta}[i]$ is $\ell_i={\rm leg}(\bbeta[i])=\beta_k-l$, which is the number of boxes strictly north of $\boldsymbol{\beta}[i]$. And the arm of $\boldsymbol{\beta}[i]$ is 
\begin{align}
 a_i=\operatorname{arm}({\bbeta}[i])&=\#\{d<k:l-1\leq\beta_d<\beta_k\}
       +\#\{d>k:l\leq\beta_d\leq\beta_k\}.
       \label{eq:Macarm}
\end{align}
This is the number of boxes strictly east of $\boldsymbol{\beta}[i]$ in columns of height $\leq \beta_k$ or strictly west
of $\south(\bbeta[i])$ in columns of height $<\beta_k$.

 {Define $s:[n-r]\longrightarrow[n]$ by $s(i)=j$ if
$\south(\bbeta[i])=\bbeta[j]$. Thus $s$ is defined precisely for boxes
above the bottom row. Let}
{ 
\begin{align*}
R_+&=\{(i,j):1\leq i<j\leq n\},\\
R_\beta&=\{(i,j)\in R_+:i\leq n-r,\ s(i)\leq j\},\\
\widehat R_\beta&=\{(i,j)\in R_+:i\leq n-r,\ s(i)<j\}.
\end{align*}
}

 For example, let $\beta=(2, 3, 4, 1, 3, 2, 1, 4)$. Then $\south(\bbeta[3])=\bbeta[8]$, thus,  $s(3)=8$ and $s({17})$ is undefined. For $\bbeta[4]$, $\ell_4={\rm leg}(\bbeta[4])=1$, $a_4=\mathrm{arm}(\bbeta[4])=4$.
$$
\begin{tikzpicture}[scale=0.6]
\foreach \y/\x/\z in {
1/1/13,2/1/7,
1/2/14,2/2/8,3/2/3,
1/3/15,2/3/9,3/3/4,4/3/1,
1/4/16,
1/5/17,2/5/10,3/5/5, 
1/6/18,2/6/11,
1/7/19,
1/8/20,2/8/12,3/8/6,4/8/2}{
      \draw (1.5*\x, 1.5*\y) rectangle (1.5*\x+1.5, 1.5*\y+1.5);
      \node[font=\scriptsize] at (1.5*\x+0.75, 1.5*\y+0.75) { {$\bbeta[{\z}]$}};
    }
\end{tikzpicture}
$$

\begin{definition}\label{def:Macseriess}
 {Let $\beta=(\beta_1,\dots,\beta_r)\models n$.} For $1\leq i<j\leq n,$ define
\begin{equation}\label{eq:Macseriess}
 C^\beta_{ij}(v;q,t)=\frac{1-v}{1-qv}
 \begin{cases}
  \displaystyle\frac{1-q^{a_i+1}t^{-\ell_i}v}{1-tv},
       &j=s(i),\\[3mm]
  \displaystyle\frac{1-qtv}{1-tv},&j>s(i), \\[1mm]
  1,&\text{otherwise.}
 \end{cases}
\end{equation}
In the first two cases $s(i)$ is understood to be defined. Let
\begin{equation}\label{eq:Cbqt}
    \mathbf C^\beta(q,t)=(C^\beta_{ij}(v;q,t))_{1\leq i<j\leq n}.
\end{equation}
\end{definition}

\begin{example}\label{ex:Macdiagram21}
For $\beta=(2,1)$, the labelled diagram is
\[
 \begin{array}{cc}
 \bbeta[1]&\\
 \bbeta[2]&\bbeta[3]
 \end{array}
\]
Only $s(1)$ is defined, and $s(1)=2$, $a_1=\ell_1=0$.
Consequently
\begin{align*}
 C_{12}^\beta(v)&=\frac{1-v}{1-tv},&
 C_{13}^\beta(v)&=\frac{(1-v)(1-qtv)}{(1-qv)(1-tv)},&
 C_{23}^\beta(v)&=\frac{1-v}{1-qv}.
\end{align*}
\end{example}

\begin{example}\label{ex:learnrectangle}
Let $\beta=(2,2)$. The diagram and its box data are
\[
 \begin{array}{cc}\bbeta[1]&\bbeta[2]\\ \bbeta[3]&\bbeta[4]\end{array}
 \qquad
 \begin{array}{c|ccc}
 i&s(i)&a_i&\ell_i\\\hline
 1&3&1&0\\2&4&0&0
 \end{array}.
\]
The bottom boxes have no south box, so their arm and leg data
are not needed in the raising series rule. Substituting in
\eqref{eq:Macseriess} yields
\[
\begin{array}{c|c|c}
 (i,j)&\text{relation to the south box}&C^\beta_{ij}(v;q,t)\\\hline
 (1,2)&2<s(1)&(1-v)/(1-qv)\\[1mm]
 (1,3)&3=s(1)&(1-v)(1-q^2v)/((1-qv)(1-tv))\\[1mm]
 (1,4)&4>s(1)&(1-v)(1-qtv)/((1-qv)(1-tv))\\[1mm]
 (2,3)&3<s(2)&(1-v)/(1-qv)\\[1mm]
 (2,4)&4=s(2)&(1-v)/(1-tv)\\[1mm]
 (3,4)&s(3)\text{ undefined}&(1-v)/(1-qv)
\end{array}
\]
\end{example}
For an integer tuple
 {$\gamma=(\gamma_1,\ldots,\gamma_r)$}, define
{ 
\[\mathbf z^\gamma=z_1^{\gamma_1}\cdots z_r^{\gamma_r}\]
}
and the linear {evaluation maps}
\[
  {\operatorname{ev}_e(z_1^{\gamma_1}\cdots z_r^{\gamma_r})
 =e_{\gamma_1}(X)\cdots e_{\gamma_r}(X)},
 \qquad e_0=1,\quad e_d=0\quad(d<0)
\]
\[
 {\operatorname{ev}_h(z_1^{\gamma_1}\cdots z_r^{\gamma_r})
 =h_{\gamma_1}(X)\cdots h_{\gamma_r}(X)},
 \qquad h_0=1,\quad h_d=0\quad(d<0)
\]
 where $X$ is the alphabet of the symmetric functions. Let $\omega$ be the symmetric-function involution exchanging $h_n$
and $e_n$, thus, \[
 (\omega\circ  \operatorname{ev}_h)(\mathbf z^\gamma)
 = {\omega(h_{\gamma_1}\cdots h_{\gamma_r})}
 = {e_{\gamma_1}\cdots e_{\gamma_r}}
 =\operatorname{ev}_e(\mathbf z^\gamma).
\]

We use the Macdonald parameters $q,t$ of \cite{BHMPS}
throughout. 
The unsymmetrized series $\widehat{\mathbf H}_\beta(\mathbf z;q,t)$
appearing in \cite[Theorem 6.1.1]{BHMPS} is
\[
 \widehat{\mathbf H}_\beta(\mathbf z;q,t)
 =\frac{
  \displaystyle\prod_{(i,j)\in R_\beta\setminus\widehat R_\beta}
       (1-q^{a_i+1}t^{-\ell_i}z_i/z_j)
  \prod_{(i,j)\in\widehat R_\beta}(1-qtz_i/z_j)}
 {\displaystyle\prod_{(i,j)\in R_+}(1-qz_i/z_j)
  \prod_{(i,j)\in R_\beta}(1-tz_i/z_j)}.
\]
 {In this subsection, calculations take place over $\mathbb C(q,t)$,
with each denominator expanded geometrically in $z_i/z_j$. The evaluation
maps extend to these series termwise. Let $\mu\vdash n$, write
$\mu^{\mathsf t}$ for its conjugate partition, and let $\beta$ be any
rearrangement of $\mu^{\mathsf t}$.}
The \emph{modified Macdonald polynomial} can be expressed by  \cite[Equation~(57)]{BHMPS} as 
\begin{equation}\label{eq:modMac}
 \widetilde H_\mu(X;q,t)
 =\omega \circ \operatorname{ev}_h \left(
 z_1\cdots z_{ {n}}\prod_{i<j}(1-z_i/z_j)
                 \widehat{\mathbf H}_\beta(\mathbf z;q,t)\right).
\end{equation}

\begin{theorem}
\label{thm:BHMPSseries}
Let $\mu\vdash { {n}}$ and let $\beta$ be any rearrangement of
$ {\mu^{\mathsf t}}$. Let $\mathbf C^\beta(q,t)$ be the collection
in Definition~\ref{def:Macseriess}. 
 Then
\begin{equation}\label{eq:BHMPSrootform}
 \begin{aligned}
 \widetilde H_\mu(X;q,t)
=\Ue_{(1^{ {n}})}({\mathbf C^\beta(q,t)};e).
 \end{aligned}
\end{equation}
\end{theorem}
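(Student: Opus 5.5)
Starting from Equation~\eqref{eq:modMac}, we rewrite the BHMPS series as a product $\prod_{i<j}C^\beta_{ij}(z_i/z_j)$ and then convert that product into raising operators acting on $h_{(1^n)}$. The root used is $e$. Since $\omega$ is an algebra morphism with $\omega(h_n)=e_n$, it is a rooted morphism $(\Sym,h)\to(\Sym,e)$, and $e$ is group-like. Hence
\[
\omega\circ\operatorname{ev}_h(\mathbf z^\gamma)=\operatorname{ev}_e(\mathbf z^\gamma)=e_{\gamma_1}\cdots e_{\gamma_n}.
\]
It therefore suffices to show that
\[
\operatorname{ev}_e\Bigl(z_1\cdots z_n\prod_{i<j}(1-z_i/z_j)\,\widehat{\mathbf H}_\beta(\mathbf z;q,t)\Bigr)=\Ue_{(1^n)}(\mathbf C^\beta(q,t);e).
\]
Alternatively, one could compute with $\operatorname{ev}_h$ and $\Ue_{(1^n)}(\mathbf C^\beta;h)$ and then apply the last part of the pair-series proposition with $\Psi=\omega$.

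\textbf{Step 1: factor pair by pair.} For each pair $(i,j)\in R_+$, collect all factors of $\prod_{i<j}(1-z_i/z_j)\widehat{\mathbf H}_\beta$ that involve $z_i/z_j$. Every pair contributes $(1-z_i/z_j)/(1-qz_i/z_j)$. If $i\le n-r$ and $j=s(i)$, then $(i,j)\in R_\beta\setminus\widehat R_\beta$, and the extra factor is $(1-q^{a_i+1}t^{-\ell_i}z_i/z_j)/(1-tz_i/z_j)$. If $j>s(i)$, then $(i,j)\in\widehat R_\beta$, and the extra factor is $(1-qtz_i/z_j)/(1-tz_i/z_j)$. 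Otherwise no extra factor appears. Comparing with Definition~\ref{def:Macseriess}, this gives
\[
\prod_{i<j}(1-z_i/z_j)\,\widehat{\mathbf H}_\beta(\mathbf z;q,t)=\prod_{1\le i<j\le n}C^\beta_{ij}(z_i/z_j;q,t),
\]
with each factor expanded geometrically in $z_i/z_j$, matching the convention in the statement.

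\textbf{Step 2: turn monomials into raising operators.} Expand the product and multiply by $z_1\cdots z_n$. Choosing the term $c_{ij,k}(z_i/z_j)^k$ in each factor produces the monomial $\mathbf z^{\gamma}$ with
\[
\gamma_i=1+\sum_{j>i}k_{ij}-\sum_{j<i}k_{ji}.
\]
This is exactly $\prod R_{ij}^{k_{ij}}(1^n)$. Applying $\operatorname{ev}_e$ termwise, with $e_0=1$ and $e_d=0$ for $d<0$, kills every tuple that has a negative entry. What remains is precisely the right-hand side of Equation~\eqref{eq:UniWp} with $\alpha=(1^n)$ and root $e$, which is $\Ue_{(1^n)}(\mathbf C^\beta;e)$.

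Two checks are needed for this identification to be legitimate.
\begin{itemize}
\item The termwise evaluation must be a finite sum in each degree. This follows from the finiteness bound proved for Theorem~\ref{prop:WCalpha}: only finitely many arrays $(k_{ij})$ give nonnegative tuples. So both sides are the same finite sum over the same index set.
\item The variable-to-position convention must agree with the one used in the proof of \eqref{eq:seriescreation}. There, $(z_j/z_i)^k$ corresponded to $R_{ij}$. Here the factor is $z_i/z_j$ with $i<j$, and it raises the $i$th index and lowers the $j$th, which is again $R_{ij}$. So the two conventions are consistent.
\end{itemize}

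\textbf{Main obstacle.} Step 1 is the real content, and its difficulty is bookkeeping: matching the index sets $R_\beta$ and $\widehat R_\beta$ with the three cases of \eqref{eq:Macseriess}, including the boundary case $i>n-r$, where $s(i)$ is undefined. I would first verify the match on Example~\ref{ex:learnrectangle}, and then argue in general directly from the definitions of $R_\beta$ and $\widehat R_\beta$. The independence of the choice of rearrangement $\beta$ is inherited from \cite{BHMPS}, via Equation~\eqref{eq:modMac}, and needs no separate proof.
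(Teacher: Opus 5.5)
Your proposal is correct and follows essentially the same route as the paper. You factor $\prod_{i<j}(1-z_i/z_j)\widehat{\mathbf H}_\beta$ pair by pair into $\prod_{i<j}C^\beta_{ij}(z_i/z_j;q,t)$ using $R_\beta=(R_\beta\setminus\widehat R_\beta)\sqcup\widehat R_\beta$, replace $\omega\circ\operatorname{ev}_h$ by $\operatorname{ev}_e$, and identify each monomial $z_1\cdots z_n\prod(z_i/z_j)^{k_{ij}}$ with $\prod R_{ij}^{k_{ij}}(1^n)$. Your explicit finiteness check and the optional detour through $\Psi=\omega$ are sound additions that the paper leaves implicit.
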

\begin{proof}

Since $R_\beta$ is the disjoint union of
$R_\beta\setminus\widehat R_\beta$ and $\widehat R_\beta$,
the displayed formula for $\widehat{\mathbf H}_\beta$ gives
{ 
\begin{align*}
 &\prod_{1\leq i<j\leq n}(1-z_i/z_j)\widehat{\mathbf H}_\beta\\
 &\quad=\prod_{(i,j)\in R_+}\frac{1-z_i/z_j}{1-qz_i/z_j}
 \cdot\prod_{(i,j)\in R_\beta\setminus\widehat R_\beta}
       \frac{1-q^{a_i+1}t^{-\ell_i}z_i/z_j}{1-tz_i/z_j}\\
 &\qquad{}\cdot\prod_{(i,j)\in\widehat R_\beta}
       \frac{1-qtz_i/z_j}{1-tz_i/z_j}\\
 &\quad=\prod_{1\leq i<j\leq n}C^\beta_{ij}(z_i/z_j;q,t).
\end{align*}
}
 {The first product contributes to every pair. The second and third products contribute precisely when $j=s(i)$ and $j>s(i)$, respectively, which are the other two cases of \eqref{eq:Macseriess}.}
Substituting this factorization into Equation \eqref{eq:modMac} proves
\[
 \widetilde H_\mu(X;q,t)
 =\operatorname{ev}_e\left(
 z_1\cdots z_{ {n}}\prod_{i<j}C^\beta_{ij}(z_i/z_j;q,t)\right).
\]

Let
$C^\beta_{ij}(v;q,t)=\sum_{k\geq0}c_{ij,k}v^k$ and choose
nonnegative integers $k_{ij}$ for all pairs. Then
\[
 z_1\cdots z_{ {n}}\prod_{i<j}(z_i/z_j)^{k_{ij}}
 =\prod_{i=1}^{ {n}} z_i^{\gamma_i}\]
 where
 \[
 \gamma_i=1+\sum_{j>i}k_{ij}-\sum_{j<i}k_{ji}.
\]
By definition, the same tuple $\gamma$ is obtained by applying
$\prod_{i<j}R_{ij}^{k_{ij}}$ to $(1^{ {n}})$. Therefore
\begin{align*}
 &\operatorname{ev}_e\left(
 z_1\cdots z_{ {n}}\prod_{i<j}C^\beta_{ij}(z_i/z_j;q,t)\right)\\
 &\quad=\sum_{\substack{k_{ij}\geq0\\\gamma_i\geq0\ (1\leq i\leq { {n}})}}
        \left(\prod_{i<j}c_{ij,k_{ij}}\right)
                      e_{\gamma_1}\cdots e_{\gamma_{ {n}}}\\
 &\quad=\prod_{i<j}C^\beta_{ij}(R_{ij};q,t)e_{(1^{ {n}})}
       =\Ue_{(1^{ {n}})}({\mathbf C^\beta(q,t)};e).
\end{align*}
\end{proof}

 {For an arbitrary root $\gelement$, let
$\mathbf{g}(u)=\gelement(-u)^{-1}$ and define the \emph{associated modified Macdonald lift by $\mathbf{g}$}}
{ 
\[
\widetilde{\mathsf M}_{\mu}(\gelement;q,t)
=\Ue_{(1^{|\mu|})}({\mathbf C^{\mu^{\mathsf t}}(q,t)};\mathbf{g}).
\]
}
 {The inverse signed series $\mathbf{g}$ is group-like, and this expression
belongs to $A_{\gelement}\otimes_{\mathbb C}\mathbb C(q,t)$.
The column order is fixed by $\mu^{\mathsf t}$; for noncommuting root
components, different rearrangements may give different lifts, and
negative powers of $t$ need not cancel. For the complete symmetric-function
root $h$, one has $\mathbf{g}=e$, so Theorem~\ref{thm:BHMPSseries} recovers
$\widetilde H_\mu(X;q,t)$.}

\subsection{Known bases obtained by choosing the raising series and root}
The following table records standard specializations.  
Here $P$ is the peak algebra generated by the components of
$Q(u)=H(-u)^{-1}H(u)$, and $\PC$ is the set of peak compositions $\alpha$,
$\alpha_i>1$ for $i<\ell(\alpha)$. The commutative Schur $Q$ root
is $q(u)=h(-u)^{-1}h(u)$.

\begin{center}
\small
\begin{tabular}{llll}
Algebra & Root & Raising series $C(v)$ & Result \\\hline
$\Sym$ & $h$ & $1$ & $h_\lambda$ \\
$\Sym$ & $e$ & $1$ & $e_\lambda$ \\
$\Sym$ & $h$ & $1-v$ & $s_\lambda$ \\
$\Sym$ & $e$ & $1-v$ & $s_{\lambda^{\mathsf t}}$ \\
$\Sym[q]$ & $h$ & $(1-v)/(1-qv)$ & $Q'_\lambda(X;q)$ \\
$\mathbb C[p_1,p_3,\ldots]$ & $q$ & $(1-v)/(1+v)$ & $Q_\lambda$ \\
$\NSym$ & $H$ & $1$ & $H_\alpha$ \\
$\NSym$ & $E$ & $1$ & $E_\alpha$ \\
$\NSym$ & $H$ & $1-v$ & $S_{(\alpha,H)}$ \\
$\NSym[q]$ & $H$ & $(1-v)/(1-qv)$ & $\widetilde S_{(\alpha,H)}(q)$ \\
$P$ & $Q$ & $(1-v)/(1+v)$ & $QS_{(\alpha,Q)}$, $\alpha\in\PC$
\end{tabular}
\end{center}

The last row and its commutative image are the raising formulas of
\cite[Proposition~3.6]{JL},
\[
 QS_{(\alpha,Q)}=\Ue_\alpha({(1-v)/(1+v)};Q).
\]

{ 
\section{Extensions to  symmetric functions in noncommuting variables}\label{sec:NCSym}

The study of symmetric functions in noncommuting variables goes back to
Wolf~\cite{Wolf}. Rosas and Sagan~\cite{RS}
gave the monomial, power-sum, elementary,
and complete homogeneous bases indexed by set partitions, including
their transition formulas.  In \cite{ALvW}, the authors with Li found a family of Schur functions in noncommuting variables. In \cite{CJ}, Cho and Jin used the action of permutations on monomials to construct Hall--Littlewood functions for $\NCSym$.

\subsection{The Hopf structure and the normalized group-like element}
 For a set partition $\pi$ of $[n]$, let
$\pi!=\prod_{B\in\pi}|B|!$. If $T\subseteq[n]$, let $\pi|_T$ be
the partition obtained by intersecting the blocks with $T$ and deleting
empty blocks. Standardization of $\pi|_T$, denoted by $\st(\pi|_T)$, is obtained by replacing the elements of $T$ increasingly
by $1,\ldots,|T|$. The \emph{concatenation} of $\pi\vdash [n]$ and $\tau\vdash [m]$, denoted by $\pi|\tau$, is the set partition whose blocks are the blocks of  $\pi$ and the blocks of $\tau$ shifted by $n$. Let $\{\hNCSym_\pi\}$ be the complete homogeneous basis of $\NCSym$. 
The complete basis satisfies
\begin{align*}
 \hNCSym_\pi\hNCSym_\tau&=\hNCSym_{\pi\mid\tau},
                                                \\
 \Delta\hNCSym_\pi
 &=\sum_{T\subseteq[n]}
   \hNCSym_{\st(\pi|_T)}\otimes
   \hNCSym_{\st(\pi|_{T^c})}.                    
\end{align*}

Consider the normalized series
\[
 \hNCSym=\sum_{n\geq0}\frac{\hNCSym_{[n]}}{n!},\qquad
 \hNCSym_n=\frac{\hNCSym_{[n]}}{n!}.
\]
Let $\hNCSym_0=1$ and $\hNCSym_n=0$ for $n<0$.
A set partition is \emph{atomic} if it has no nontrivial shifted
concatenation factorization. Every set partition has a unique
factorization into atomic partitions. 

\begin{proposition}\label{prop:Nfree}
The root $\hNCSym$ is free and group-like. Its universal morphism is
the Hopf embedding
\[
 J:\NSym\hookrightarrow\NCSym,\qquad
 J(H_n)=\frac{\hNCSym_{[n]}}{n!},\qquad
 J(H_\alpha)=\frac{\hNCSym_{[\alpha]}}{\alpha!},
\]
where $[\alpha]=[\alpha_1]\mid\cdots\mid[\alpha_r]$ and
$\alpha!=\alpha_1!\cdots\alpha_r!$.
\end{proposition}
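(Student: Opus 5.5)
We prove the group-like property first, then freeness, and finally identify the universal morphism $J$ of Theorem~\ref{thm:initial}.

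\textbf{Group-like.} We apply the coproduct formula to $\pi=[n]$, the one-block partition. For $T\subseteq[n]$ with $|T|=i$, both $\st([n]|_T)=[i]$ and $\st([n]|_{T^c})=[n-i]$ are one-block partitions. Hence
\[
\Delta\hNCSym_{[n]}=\sum_{i=0}^n\binom{n}{i}\hNCSym_{[i]}\otimes\hNCSym_{[n-i]}.
\]
Dividing by $n!$ gives $\Delta(\hNCSym_n)=\sum_{i+j=n}\hNCSym_i\otimes\hNCSym_j$, and $\hNCSym_0=1$. So $\hNCSym\in G(\widehat{\NCSym})$.

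\textbf{Freeness.} Since $\hNCSym$ is a product of one-block normalized terms, the multiplication rule $\hNCSym_\pi\hNCSym_\tau=\hNCSym_{\pi|\tau}$ gives
\[
\hNCSym_\alpha=\hNCSym_{\alpha_1}\cdots\hNCSym_{\alpha_r}=\frac{\hNCSym_{[\alpha_1]|\cdots|[\alpha_r]}}{\alpha!}=\frac{\hNCSym_{[\alpha]}}{\alpha!}.
\]
Distinct compositions $\alpha$ give distinct set partitions $[\alpha]$. Indeed, $\alpha$ is recovered from $[\alpha]$ as its sequence of block sizes, since the blocks of $[\alpha]$ are consecutive intervals listed in order. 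Because $\{\hNCSym_\pi\}$ is a basis of $\NCSym$, the elements $\hNCSym_\alpha$ are nonzero scalar multiples of distinct basis vectors. They are therefore linearly independent, and the root is free in the sense of Definition~\ref{def:free}, in the noncommutative case.

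\textbf{The universal morphism.} By Theorem~\ref{thm:initial}, $J$ is the unique rooted morphism with $J(H_n)=\hNCSym_n$. Since $J$ is multiplicative, $J(H_\alpha)=\hNCSym_\alpha=\hNCSym_{[\alpha]}/\alpha!$. The morphism $J$ sends the basis $\{H_\alpha\}$ of $\NSym$ to a linearly independent set, so it is injective. Its compatibility with the coproduct is already guaranteed by Theorem~\ref{thm:initial}, because the root is group-like.

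The only point that needs care is the coproduct computation in the group-like step: one must check that restriction of $[n]$ to any subset standardizes to a single block, and that the count of $T$ of size $i$ produces exactly the binomial coefficient cancelled by the $1/n!$ normalization. Everything after that follows formally from Theorem~\ref{thm:initial}.
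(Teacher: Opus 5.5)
Your proposal is correct and follows the paper's proof: compute $\Delta\hNCSym_{[n]}=\sum_{i}\binom ni\hNCSym_{[i]}\otimes\hNCSym_{[n-i]}$ from the subset formula and divide by $n!$. You also spell out what the paper leaves as ``the rest follows'': freeness, since distinct compositions $\alpha$ give distinct basis elements $\hNCSym_{[\alpha]}$, and injectivity of $J$.
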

\begin{proof}
We have that
$\Delta\hNCSym_{[n]}=\sum_{i=0}^n\binom ni
\hNCSym_{[i]}\otimes\hNCSym_{[n-i]}$.
Dividing by $n!$ proves the group-like identity, and the rest follows.
\end{proof}

\begin{example}\label{ex:learnNMac}
We have that
\begin{align*}
 \widetilde{\mathsf M}_{(2)}(\hNCSym;q,t)
 &={q\hNCSym_{1/2}}+\frac{1-q}{2}\hNCSym_{[2]},\\
 \widetilde{\mathsf M}_{(1,1)}(\hNCSym;q,t)
 &=t\hNCSym_{1/2}+\frac{1-t}{2}\hNCSym_{[2]},\\
 \widetilde{\mathsf M}_{(2,1)}(\hNCSym;q,t)
 &=qt\hNCSym_{1/2/3}
   +\frac{t(1-q)}2\hNCSym_{1/23}
   +\frac{q(1-t)}2\hNCSym_{12/3}
   +\frac{(1-q)(1-t)}6\hNCSym_{[3]}.
\end{align*}
The map that lets the variables commute sends
$\hNCSym_\pi$ to $\pi!h_{\lambda(\pi)}$, where $\lambda(\pi)$
is the decreasing rearrangement of the block sizes. Thus the displayed
expressions map to the corresponding modified Macdonald polynomials.
\end{example}

\subsection{Extending the source families}
Recall from \eqref{eq:seriesdefinition} that
\[
 \Ue_\alpha(C;\hNCSym)=\prod_{i<j}C(R_{ij})\hNCSym_\alpha.
\]
Factor $\pi$ into atomic partitions. In the normalized product
$\hNCSym_\pi/\pi!$, replace each maximal run of one-block atomic
factors $\hNCSym_{a_1}\cdots\hNCSym_{a_r}$ by
$\Ue_{(a_1,\ldots,a_r)}(C;\hNCSym)$. Keep all other normalized
atomic factors in their original order, and denote the resulting
element by $\Sp^C_\pi(\hNCSym)$.

\begin{example}
For $\pi=(14/23)\mid12\mid123\mid(145/23)\mid12\mid12\vdash[18]$,
\[
 \Sp_\pi^C(\hNCSym)
 =\frac{\hNCSym_{14/23}}{4}\,
   \Ue_{(2,3)}(C;\hNCSym)\,
   \frac{\hNCSym_{145/23}}{12}\,
   \Ue_{(2,2)}(C;\hNCSym).
\]
\end{example}

\begin{proposition}\label{prop:Nfullseries}
For $C(v)\in K[[v]]$ with constant term $1$, the elements
$\{\Sp^C_\pi(\hNCSym):\pi\vdash[n]\}$ form a basis of
$\NCSym_n\otimes_{\mathbb{C}}K$. For $C(v)=1-v$ and $C(v)=(1-v)/(1-qv)$, they extend
the Schur and Hall--Littlewood polynomials, respectively; in the
second case $K=\mathbb C[q]$.
\end{proposition}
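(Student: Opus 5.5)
The plan is a unitriangularity argument against the basis $\{\hNCSym_\pi/\pi!:\pi\vdash[n]\}$ of $\NCSym_n\otimes K$. That set is a basis because $\{\hNCSym_\pi\}$ is a basis of $\NCSym$ and the $\pi!$ are nonzero scalars.

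Write $\pi=\pi_1\mid\cdots\mid\pi_m$ as its unique atomic factorization. Group the factors into maximal runs of one-block atomics $[a_1]\mid\cdots\mid[a_r]$, separated by atomic factors with at least two blocks. By the multiplicativity $\hNCSym_\pi\hNCSym_\tau=\hNCSym_{\pi\mid\tau}$, we have $\hNCSym_\pi/\pi!=\prod_k \hNCSym_{\pi_k}/\pi_k!$. The element $\Sp^C_\pi(\hNCSym)$ is obtained by replacing each run $\hNCSym_{a_1}\cdots\hNCSym_{a_r}$ with $\Ue_{(a_1,\dots,a_r)}(C;\hNCSym)$.

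By Theorem~\ref{prop:WCalpha}, specifically its triangularity statement, each such factor equals
\[
\hNCSym_{(a_1,\dots,a_r)}+\sum_{\gamma>_{\mathrm{lex}}(a_1,\dots,a_r)} a_{\gamma}\hNCSym_\gamma .
\]
Here $\gamma$ runs over compositions of the same size with the same or shorter length, obtained after deleting zero parts. The sum is finite by the finiteness bound in that theorem, and its coefficients lie in $K$.

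Substituting and multiplying out via concatenation, $\Sp^C_\pi(\hNCSym)$ becomes $\hNCSym_\pi/\pi!$ plus a $K$-combination of terms $\hNCSym_{\sigma}/\sigma!$. Each $\sigma$ is obtained from $\pi$ by replacing the runs of one-block atomics by other sequences of one-block atomics $[\gamma_1]\mid\cdots$, with at least one run changed to a lex-larger composition, while the non-singleton-block atomic factors stay in place. Note that $\hNCSym_{[\gamma_1]}\cdots = \gamma!\,\hNCSym_{[\gamma]}/\gamma!$, so the normalization is consistent.

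To conclude I will define a partial order on set partitions of $[n]$ under which these $\sigma$ are strictly larger than $\pi$. The main obstacle is that the one-block atomics of $\sigma$ can merge with neighbours differently. For example, an atomic non-singleton factor is unchanged, and concatenating one-block partitions never creates new non-singleton-block atomics, so the atomic factorization of $\sigma$ is exactly the new sequence: the rigid factors $\pi_k$ with $\geq2$ blocks in the same order, with new one-block runs between them.

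So I encode $\pi$ by the sequence of rigid factors (fixed) together with the tuple of run compositions $(\alpha^{(0)},\alpha^{(1)},\dots)$, where the run positions are determined as the gaps between rigid factors, with empty runs allowed. The sizes of the runs are preserved, since each $\Ue$ factor is homogeneous of the run's degree. Order these tuples componentwise by lexicographic order, taking the product order of the lex orders. Then $\sigma\neq\pi$ implies $\sigma>\pi$ in this order, and the encoding is injective because the atomic factorization is unique.

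Hence the transition matrix from $\{\hNCSym_\pi/\pi!\}$ to $\{\Sp^C_\pi\}$, within each fixed rigid skeleton, is unitriangular with respect to a linear extension of this finite partial order. Therefore the transition matrix is invertible over $K$, and $\{\Sp^C_\pi\}$ is a basis.

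For the extension claims: if $\pi=[\alpha]$ has only one-block atomics, then $\Sp^C_\pi=\Ue_\alpha(C;\hNCSym)$. By the theorem on \eqref{eq:seriescreation}, with $(a,b)=(-1,0)$ and $(-1,q)$, this equals $S_{(\alpha,\hNCSym)}$, respectively $\widetilde S_{(\alpha,\hNCSym)}(q)$. By Theorem~\ref{prop:WCalpha} applied to the rooted morphism $J$ of Proposition~\ref{prop:Nfree}, these are the images under $J$ of the immaculate and modified Hall--Littlewood functions. Composing with the commutative image map, $\hNCSym_\pi\mapsto\pi!h_{\lambda(\pi)}$, which is rooted to $(\Sym,h)$, sends them for a partition $\lambda$ to $s_\lambda$ and $Q'_\lambda(X;q)$, using Theorem~\ref{thm:JT} and the table of specializations.
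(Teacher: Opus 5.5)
Your proposal is correct and is essentially the paper's argument. Both proofs show that $\Sp^C_\pi(\hNCSym)$ equals $\hNCSym_\pi/\pi!$ plus terms indexed by strictly larger set partitions, using the lex-triangularity of $\Ue_\alpha(C;\hNCSym)$ (the first affected part grows and stays one-block), so the transition matrix from the normalized complete basis is unitriangular. The only differences are minor: the paper uses a single total order (lexicographic on atomic-factor words, with atomics ordered by size) instead of your product of lex orders within a fixed rigid skeleton, and it leaves the Schur/Hall--Littlewood extension claim unproved, which you justify via the creation-operator theorem, $J$, and the commutative image.
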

\begin{proof}
Any set partition is a word on atomic set partitions. 
Order atomic partitions first by size, with arbitrary fixed tie-breaking. 
In a nonidentity raising monomial, the first affected factor increases in size and remains
one-block. Thus, $$\Sp_\pi^C(\hNCSym)=\frac{\hNCSym_\pi}{\pi!}+\sum_{\tau> \pi} a_{\pi,\tau} \frac{\hNCSym_\tau}{\tau!}$$
 Consequently, the transition matrix from the normalized complete
basis is therefore unitriangular in each degree.
\end{proof}

\subsection{Extending the raising operators}\label{subsec:Nambient}
For this construction, we use the basis
$\{\eNCSym_\pi/\pi!\}$ of $\NCSym$, where $\eNCSym_\pi$ denotes
the elementary symmetric function indexed by the set partition $\pi$. For the empty partition $\emptyset$, we let $\eNCSym_{\emptyset}=1$. 
We have that
\[
 \eNCSym_\pi\eNCSym_\tau=\eNCSym_{\pi\mid\tau},\qquad
 \eNCSym_n=\frac{\eNCSym_{[n]}}{n!},\qquad
 \eNCSym=\sum_{n\geq0}\eNCSym_n,
\]
with $\eNCSym_0=1$ and $\eNCSym_n=0$ for $n<0$.
We have that
$\Delta\eNCSym_{[n]}=\sum_{i=0}^n\binom ni
\eNCSym_{[i]}\otimes\eNCSym_{[n-i]}$ and  
show that $\eNCSym$ is a free root. We have the 
involution $\omega_{\NCSym}(\hNCSym_\pi)=\eNCSym_\pi$
 takes one free root to another.

For an atomic set partition $\rho\vdash[m]$, define $\rho\langle d\rangle$
for integers $d\geq0$ as follows. For $0\leq d\leq m$, restrict $\rho$
to $[d]$. For $d>m$, adjoin $m+1,\ldots,d$ to the block containing
$m$. In particular, $\rho\langle0\rangle=\emptyset$,
$\rho\langle m\rangle=\rho$, and $\rho\langle d\rangle$ is
atomic for every $d>m$.  For example,
$(125/34)\langle 7\rangle =12567/34$ and
$(125/3467)\langle 5\rangle=125/34$.

Write $\pi=\pi^{(1)}\mid\cdots\mid\pi^{(r)}$ for its atomic
factorization and let $\mathbf{n}(\pi)=(n_1,\ldots,n_r)$, where
$n_i=|\pi^{(i)}|$.  
For
nonnegative integers $k_{ij}$, set
\[
 d_i=n_i+\sum_{j>i}k_{ij}-\sum_{j<i}k_{ji},
\]
and define the action of a monomial in the raising operators by
\[
 \left(\prod_{i<j}R_{ij}^{k_{ij}}\right)
       \frac{\eNCSym_\pi}{\pi!}
 =\begin{cases}
 \displaystyle\prod_{i=1}^r
   \frac{\eNCSym_{\pi^{(i)}\langle d_i\rangle}}
        {\pi^{(i)}\langle d_i\rangle!},&d_i\geq0\text{ for every }i,\\[6pt]
 0,&\text{otherwise}.
 \end{cases}
\]
The product is taken from left to right, with an empty partition
contributing $1$. 
For each length $r$, choose scalar series
$\mathbf C=(C_{ij})_{1\leq i<j\leq r}$ with
$C_{ij}(v)=\sum_{k\geq0}c_{ij,k}v^k\in K[[v]]$ and $c_{ij,0}=1$. Define
\begin{equation}\label{eq:Npairseries}
 \Ue_\pi({\mathbf C};\eNCSym)
 =\left(\prod_{1\leq i<j\leq r}C_{ij}(R_{ij})\right)
       \frac{\eNCSym_\pi}{\pi!}.
\end{equation}

\begin{theorem}\label{prop:mainW} 
For each length $r$, let
$\mathbf C=(C_{ij})$ with
$C_{ij}(v)=\sum_{k\geq0}c_{ij,k}v^k$ and $c_{ij,0}=1$. The elements
$\{\Ue_\pi({\mathbf C};\eNCSym):\pi\vdash[n]\}$ form a basis
of $\NCSym_n\otimes_{\mathbb{C}}K$. Taking every $C_{ij}(v)$ equal to $1-v$ or to
$(1-v)/(1-qv)$ extends the Schur or modified
Hall--Littlewood lifts, respectively. 
\end{theorem}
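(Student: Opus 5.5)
The plan is a unitriangularity argument with respect to the normalized elementary basis $\{\eNCSym_\pi/\pi!\}$, modeled on Proposition~\ref{prop:Nfullseries}, followed by a comparison with the universal element $\Ue_\alpha$ on the root-generated subalgebra.

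\textbf{Finiteness and degree.} First I will check that \eqref{eq:Npairseries} is a finite sum lying in $\NCSym_n\otimes K$. Each monomial $\prod R_{ij}^{k_{ij}}$ preserves $\sum_i d_i=\sum_i n_i=n$. The finiteness bound in the proof of Theorem~\ref{prop:WCalpha} applies verbatim with $\alpha=\mathbf n(\pi)$ and $\beta=(d_i)$: if all $d_i\geq 0$, then $\sum_{i<j}(j-i)k_{ij}\leq\sum_i(i-1)n_i$. So only finitely many arrays $(k_{ij})$ contribute, and each surviving term $\prod_i \eNCSym_{\pi^{(i)}\langle d_i\rangle}/\pi^{(i)}\langle d_i\rangle!$ is a normalized elementary element of degree $n$.

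\textbf{Triangularity.} Next I will show
\[
 \Ue_\pi(\mathbf C;\eNCSym)=\frac{\eNCSym_\pi}{\pi!}+\sum_{\tau}a_{\pi\tau}\frac{\eNCSym_\tau}{\tau!},
\]
where every $\tau$ is strictly larger than $\pi$ in a suitable order. The trivial monomial $k=0$ gives $\rho\langle n_i\rangle=\rho$ for each factor, hence the leading term $\eNCSym_\pi/\pi!$ with coefficient $c_{ij,0}=1$.

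For a nontrivial monomial, take the least $i$ with some $k_{ij}>0$. Then $d_1,\dots,d_{i-1}$ are unchanged, so the factors $\pi^{(1)},\dots,\pi^{(i-1)}$ are kept, and $d_i>n_i$. By definition of $\rho\langle d\rangle$ for $d>m$, the factor $\pi^{(i)}\langle d_i\rangle$ is atomic of size $d_i$. The later factors $\pi^{(j)}\langle d_j\rangle$ may be non-atomic (restrictions) or empty. Still, the atomic factorization of the resulting $\tau$ begins with $\pi^{(1)},\dots,\pi^{(i-1)}$, followed by an atomic block of size $d_i>n_i$.

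Order set partitions by comparing their atomic words lexicographically, with atomic partitions compared first by size (ties broken arbitrarily). Under this order $\tau>\pi$. Since the order is total on the finite set of set partitions of $[n]$, the transition matrix to the basis $\{\eNCSym_\pi/\pi!\}$ is unitriangular, which proves the basis claim. This step needs $\{\eNCSym_\pi\}$ to be a basis with $\eNCSym_\pi\eNCSym_\tau=\eNCSym_{\pi\mid\tau}$, which is given.

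\textbf{Extension claim.} When $\pi=[\alpha]$, i.e. $\pi=[\alpha_1]\mid\cdots\mid[\alpha_r]$, every atomic factor is a one-block partition, and $[m]\langle d\rangle=[d]$ (for $d\le m$ by restriction, for $d>m$ by adjoining to the block). Hence the raising action reproduces exactly the tuple action on $\eNCSym_\alpha=\eNCSym_{\alpha_1}\cdots\eNCSym_{\alpha_r}$. Therefore $\Ue_{[\alpha]}(\mathbf C;\eNCSym)=\Ue_\alpha(\mathbf C;\eNCSym)$, the universal element of Section~\ref{subsec:seriess} for the root $\eNCSym$.

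For $C_{ij}=1-v$ this is the associated Schur function of $\eNCSym$ (via the discussion after \eqref{eq:seriescreation}, since $\eNCSym$ is a free root). For $C_{ij}=(1-v)/(1-qv)$ it is $\widetilde S_{(\alpha,\eNCSym)}(q)$. By Theorem~\ref{prop:WCalpha}, these are images under $J$ of the immaculate and modified Hall--Littlewood functions.

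\textbf{Main obstacle.} The delicate step is the triangularity order, in two respects:
\begin{itemize}
\item Restrictions $\rho\langle d\rangle$ with $d<m$ can split into several atomic pieces or vanish entirely. The argument must rely only on the first affected position, never on the shape of later factors.
\item It must be verified that $\rho\langle d\rangle$ is atomic when $d>m$, which requires $\rho$ itself to be atomic.
\end{itemize}
I expect both to be handled by the lexicographic comparison at the first index $i$ with some $k_{ij}>0$.
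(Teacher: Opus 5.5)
Your proposal is correct and matches the paper's proof. Both use the same finiteness bound on $\sum_{i<j}(j-i)k_{ij}$, and both establish unitriangularity against $\{\eNCSym_\pi/\pi!\}$ in the lexicographic order on atomic factor words, using that the first affected factor $\pi^{(i)}\langle d_i\rangle$ is atomic of size $d_i>n_i$. The paper's proof leaves the ``extends'' clause implicit; your check that $\Ue_{[\alpha]}(\mathbf C;\eNCSym)=\Ue_\alpha(\mathbf C;\eNCSym)$, via $[m]\langle d\rangle=[d]$, supplies it.
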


\begin{proof}
Write $\pi=\pi^{(1)}\mid\cdots\mid\pi^{(r)}$ for its atomic
factorization and let $\mathbf n=(n_1,\ldots,n_r)$, where
$n_i=|\pi^{(i)}|$.
A monomial $\prod_{i<j}R_{ij}^{k_{ij}}$ gives the final degrees
$d_i$ above. If all $d_i$ are nonnegative, then
\[
 \sum_{i<j}(j-i)k_{ij}
 =\sum_i i n_i-\sum_i i d_i\leq\sum_i i n_i.
\]
Only finitely many raising-operator monomials survive.
The identity monomial contributes $\eNCSym_\pi/\pi!$. Use the lexicographic order on atomic factor words from the proof of Proposition \ref{prop:mainW}. 
Action of a nonidentity monomial of raising operators on $\eNCSym_\pi/\pi!$, changes the sizes of the atomic partitions after factorization, so that the first changed atomic factor increases in size. Therefore,
\[
\Ue_\pi(\mathbf{C};\eNCSym)=\frac{\eNCSym_\pi}{\pi!}+\sum_{\tau>\pi} c_{\pi,\tau}\frac{\eNCSym_\tau}{\tau!}.
\]
Thus, the transition matrix from the normalized elementary
basis is therefore unitriangular in each degree.
\end{proof}
For a fixed $\mu\vdash n$, take $K=\mathbb C(q,t)$ and $\mathbf C=\mathbf C^{\mu^{\mathsf t}}(q,t)$. The basis element indexed by the  set partition $1|2|\cdots|n$ is
\[
 \Ue_{1|2|\cdots|n}(\mathbf C;\eNCSym)
 =\Ue_{(1^n)}(\mathbf C^{\mu^{\mathsf t}}(q,t);\eNCSym),
\]
which maps to $\widetilde H_\mu(X;q,t)$ after commuting the variables.

\subsection{The permutation action and Schur functions}
For $\delta\in\mathfrak S_n$, define its action on words of length $n$ by
\begin{equation}\label{eq:Naction}
 \delta\circ(x_{i_1}\cdots x_{i_n})
 =x_{i_{\delta^{-1}(1)}}\cdots x_{i_{\delta^{-1}(n)}}.
\end{equation}
Extend linearly. We have that
$\delta\circ\hNCSym_\pi=\mathbf \hNCSym_{\delta(\pi)}$, so this action
preserves $\NCSym_n$. To specify the permutation $\delta_\pi$ of
\cite[Section~4]{ALvW}, order the blocks of $\pi$ by decreasing size,
breaking ties by increasing least element, and list each block increasingly.
Their concatenation is the one-line notation of $\delta_\pi$.
Writing $\lambda=\lambda(\pi)$, the source Schur function and the
standard Schur function of \cite{ALvW} are, respectively,
\[
 s_{[\lambda]}=\Ue_\lambda({1-v};\hNCSym),\qquad
 s_\pi=\delta_\pi\circ\Ue_\lambda({1-v};\hNCSym).
\]
}
It would be interesting to find all 
     suitable permutations $\sigma_\pi$ for each partition $\pi$ such that $\{\sigma_\pi \circ \widetilde{\mathsf M}_{\lambda(\pi)}(\hNCSym;q,t) : \pi\vdash [n]\}$ is a basis for $\NCSym_n$.

\section{Extensions to the Malvenuto--Reutenauer Hopf algebra of permutations}\label{sec:FQSym}

The Malvenuto--Reutenauer Hopf algebra was introduced in \cite{MR}, and its basis is indexed by permutations. 

\subsection{The Hopf structure and the normalized group-like element}
For a permutation $\sigma\in\mathfrak S_n$, write
$\sigma=\sigma_1\cdots\sigma_n$ in one-line notation. Let $|\sigma|=n$. Let $X_n^*$ denote the set of words of length $n$ with letters in the positive integers.
The standardization of a word $w={i_1}{i_2}\dots {i_n}\in X_n^*$,  denoted by $\st(w)$, is the permutation $\sigma_1\sigma_2\cdots \sigma_n$ where 
$$\sigma_t=\#\{j:i_j<i_t\} + \{j\leq t:i_j=i_t\}.$$ For $\sigma\in \mathfrak{S}_n$,  
define  
$$\mathbf{G}_\sigma=\sum_{i_1i_2\dots i_n\in X_n^* \atop \st(i_1i_2\dots i_n)=\sigma} x_{i_1}x_{i_2}\cdots x_{i_n}.$$
Then  $\{\mathbf G_\sigma\}$ denotes the
standard basis in the convention
$\mathbf G_\sigma=\mathbf F_{\sigma^{-1}}$ of~\cite{DHT}.
For $A\subseteq[n]$, let $\sigma|_A$ be the subword consisting of
letters in $A$. Let $\sigma\in\mathfrak{S}_n$ and $\tau\in\mathfrak S_m$.  The product and coproduct are
\begin{align*}
 \mathbf G_\sigma\mathbf G_\tau
 &=\sum_{\substack{\gamma\in\mathfrak S_{n+m}\\
        \st(\gamma_1\cdots\gamma_n)=\sigma\\
        \st(\gamma_{n+1}\cdots\gamma_{n+m})=\tau}}
        \mathbf G_\gamma,
                                                        \\
 \Delta\mathbf G_\sigma
 &=\sum_{i=0}^n
    \mathbf G_{\st(\sigma|_{[i]})}\otimes
    \mathbf G_{\st(\sigma|_{\{i+1,\ldots,n\}})}. 
\end{align*}

The \emph{right--shifted concatenation} and \emph{left--shifted concatenation} of
$\sigma\in\mathfrak S_n$ and $\tau\in\mathfrak S_m$ are
\[
 \sigma  \curvearrowright \tau=\sigma_1\cdots\sigma_n
                       (\tau_1+n)\cdots(\tau_m+n)\qquad \text{and} \qquad
 \sigma \curvearrowleft   \tau=(\sigma_1+m)\cdots(\sigma_n+m)
                       \tau_1\cdots\tau_m,
\]
respectively. 
A nonempty permutation is \emph{right-atomic}, respectively
\emph{left-atomic}, if it has no nontrivial right--shifted, respectively
left--shifted, concatenation factorization. Every permutation has a unique
factorization of each kind. If
$\sigma=\sigma^{(1)}\curvearrowleft\cdots\curvearrowleft\sigma^{(r)}$
is its left-atomic factorization, define
\[
 \mathbf{H}_\sigma
 =\mathbf G_{\sigma^{(1)}}\cdots\mathbf G_{\sigma^{(r)}}.
\]
Then $\mathbf{H}_\sigma\mathbf{H}_\tau=\mathbf{H}_{\sigma\curvearrowleft\tau}$.
Consider that the set
$\{\mathbf{H}_\sigma:\sigma\in\mathfrak S_n\}$ is a basis of
$\FQSym_n$.

\subsection{Extending the source families}
Let $\iota_n=12\cdots n$. Consider that the series
\[
 \mathbf{H}=\sum_{n\geq0}\mathbf{H}_n,
\]
where 
 $\mathbf{H}_n=\mathbf{H}_{\iota_n}=\mathbf G_{\iota_n}$ is a  free root of ${\FQSym}$. 
Let $\mathbf{H}_0=1$ and $\mathbf{H}_n=0$ for $n<0$.

Recall that for $C(v)\in K[[v]]$, 
\[
 \Ue_\alpha(C;\mathbf H)
 =\prod_{1\leq i<j\leq r}C(R_{ij})\mathbf H_{\alpha_1}\cdots\mathbf H_{\alpha_r}.
\]
Factor $\sigma$ into left-atomic permutations. In the product
$\mathbf H_\sigma$, replace each maximal run of increasing left--atomic
factors $\mathbf H_{a_1}\cdots\mathbf H_{a_r}$ by
$\Ue_{(a_1,\ldots,a_r)}(C;\mathbf H)$. Keep all other left--atomic factors
in their original order, and denote the resulting element by
$\Sp^C_\sigma(\mathbf H)$.

\begin{example}
For $\sigma=132\curvearrowleft12\curvearrowleft123\curvearrowleft213\curvearrowleft12\curvearrowleft12$,
\[
 \Sp^C_\sigma(\mathbf H)
 =\mathbf G_{132}\,
   \Ue_{(2,3)}(C;\mathbf H)\,
   \mathbf G_{213}\,
   \Ue_{(2,2)}(C;\mathbf H).
\]
\end{example}

The proof of the following proposition is analogous to that of  Proposition \ref{prop:Nfullseries}. 
\begin{proposition}\label{prop:Ffullkernel}
For $C(v)\in K[[v]]$ with constant term $1$, the elements
$\{\Sp^C_\sigma(\mathbf H):\sigma\in\mathfrak S_n\}$ form a basis
of $\FQSym_n\otimes_{\mathbb C}K$. For $C(v)=1-v$ and
$C(v)=(1-v)/(1-qv)$, they extend the Schur and modified
Hall--Littlewood lifts for $\mathbf H$, respectively; in the second
case $K=\mathbb C[q]$.
\end{proposition}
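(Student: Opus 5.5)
We will show that the transition matrix from $\{\mathbf H_\sigma:\sigma\in\mathfrak S_n\}$ to $\{\Sp^C_\sigma(\mathbf H)\}$ is unitriangular with respect to a suitable total order. Since $\{\mathbf H_\sigma\}$ is already a basis of $\FQSym_n$ (stated above), this gives a basis of $\FQSym_n\otimes_{\mathbb C}K$. This mirrors the proof of Proposition~\ref{prop:Nfullseries}.

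\emph{Setting up the words.} Every $\sigma$ has a unique left-atomic factorization $\sigma=\sigma^{(1)}\curvearrowleft\cdots\curvearrowleft\sigma^{(r)}$. So $\sigma$ is encoded as a word in left-atomic permutations, and $\mathbf H_\sigma\mathbf H_\tau=\mathbf H_{\sigma\curvearrowleft\tau}$ says that products of $\mathbf H$'s correspond to concatenation of words. The identity $\iota_a$ is left-atomic precisely when $a=1$. This matters because the root components satisfy $\mathbf H_a=\mathbf G_{\iota_a}$, and for $a\ge2$ the element $\mathbf H_{\iota_a}$ is the product $\mathbf G_{\iota_a}$ of a single factor, whereas the left-atomic factorization of $\iota_a$ is $1\curvearrowleft\cdots$. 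So I will first check how the definition of ``increasing left-atomic factors'' is meant: presumably each factor $\mathbf G_{\iota_a}=\mathbf H_a$ in the product is treated as one letter $\iota_a$. I will therefore fix an alphabet $\mathcal A$ consisting of the non-identity left-atomic permutations together with the symbols $\iota_a$, $a\ge1$. Each $\sigma$ then corresponds to a unique word over $\mathcal A$ whose product of $\mathbf G$'s is $\mathbf H_\sigma$. The uniqueness comes from grouping maximal runs, or equivalently from the freeness of the root $\mathbf H$ together with the free structure of $\FQSym$.

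\emph{Expanding a single run.} I expand $\Ue_{(a_1,\dots,a_r)}(C;\mathbf H)$ by Theorem~\ref{prop:WCalpha}. The identity monomial contributes $\mathbf H_{a_1}\cdots\mathbf H_{a_r}$. Every other monomial gives $c\,\mathbf H_{\beta_1}\cdots\mathbf H_{\beta_r}$ in which the first changed index strictly increases. Finiteness of this expansion is already part of Theorem~\ref{prop:WCalpha}. Zero parts disappear, and the resulting word is again a run of identity letters. Substituting back between the untouched non-identity atomic factors, I get
\[
\Sp^C_\sigma(\mathbf H)=\mathbf H_\sigma+\sum_{\tau}a_{\sigma,\tau}\mathbf H_\tau ,
\]
where each $\tau$ is obtained from the word of $\sigma$ by replacing one run with a run of the same total size whose first differing letter is a larger identity letter.

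\emph{The order.} I order letters first by size, then by an arbitrary fixed tie-break, and compare words lexicographically. In each $\tau$ above, the first differing letter is $\iota_{\beta_i}$ with $\beta_i>a_i$, so $\tau>\sigma$ lexicographically. All words involved have total degree $n$, so there are finitely many of them. The matrix is therefore unitriangular with entries in $K$, hence invertible over $K$. The Schur and modified Hall--Littlewood claims follow because for $\sigma=\iota_{a_1}\curvearrowleft\cdots$ the element is exactly $\Ue_\alpha(C;\mathbf H)$. By the creation-operator theorem above, this equals $S_{(\alpha,\mathbf H)}$ for $C=1-v$ and $\widetilde S_{(\alpha,\mathbf H)}(q)$ for $C=(1-v)/(1-qv)$.

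\emph{Main obstacle.} The delicate step is that the resulting products $\mathbf H_{\beta}$, placed between the fixed non-identity factors, are honestly basis elements $\mathbf H_\tau$. This requires the correspondence between words over $\mathcal A$ and permutations to be a bijection. Two points need checking. First, merging adjacent identity letters must not create ambiguity; I would resolve this by indexing by words and not by maximal-run normal forms. Second, I must verify that $\{\mathbf G_{w_1}\cdots\mathbf G_{w_k}\}$ over such words is linearly independent with the right count. If the direct bijection fails, I would instead prove independence by a leading-term argument in the $\mathbf G$-basis: the product $\mathbf G_\sigma\mathbf G_\tau$ contains $\mathbf G_{\sigma\curvearrowleft\tau}$ as its extreme term.
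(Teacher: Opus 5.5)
Your strategy is the paper's. The proof there is declared analogous to that of Proposition~\ref{prop:Nfullseries}, and runs as follows. Read $\sigma$ as a word in its left-atomic factors, order left-atomic permutations by size with a fixed tie-break, and compare words lexicographically. In every nonidentity raising monomial the first changed factor is an identity permutation of strictly larger size, so $\Sp^C_\sigma(\mathbf H)=\mathbf H_\sigma+\sum_{\tau>\sigma}a_{\sigma,\tau}\mathbf H_\tau$.

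However, your setup rests on a false claim, and that claim is the only source of your ``main obstacle''.
\begin{itemize}
\item \emph{The identity is left-atomic for every size.} $\iota_a$ is left-atomic for every $a\geq1$, not only for $a=1$. A nontrivial $\sigma\curvearrowleft\tau$ has all its first $|\sigma|$ letters larger than all its last $|\tau|$ letters, whereas every prefix of $\iota_a$ consists of letters smaller than those of the complementary suffix.
\item \emph{You have swapped $\iota_a$ and $\omega_a$.} It is $\omega_a$ that factors as $1\curvearrowleft\cdots\curvearrowleft 1$ (for instance $1\curvearrowleft 1=21$). By contrast, $\iota_a=1\curvearrowright\cdots\curvearrowright 1$ is a right-atomic factorization.
\item \emph{Consequences.} There is no tension between $\mathbf H_a=\mathbf G_{\iota_a}$ and the definition of $\mathbf H_\sigma$. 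Your alphabet $\mathcal A$ is exactly the set of left-atomic permutations. The bijection between words and permutations is simply uniqueness of the left-atomic factorization; it does not come from grouping maximal runs or from freeness of the root.
\item \emph{No merging occurs.} Take a term $\mathbf H_{\beta_1}\cdots\mathbf H_{\beta_r}$ with zero parts dropped, placed between the untouched non-identity factors. It equals $\mathbf G_{w_1}\cdots\mathbf G_{w_k}=\mathbf H_{w_1\curvearrowleft\cdots\curvearrowleft w_k}$ with every $w_i$ left-atomic. Since $\iota_a\curvearrowleft\iota_b=(1+b)\cdots(a+b)\,1\cdots b\neq\iota_{a+b}$, its word is exactly the one you wrote down.
\end{itemize}
So neither your extra verification nor your fallback leading-term argument is needed. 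The fallback is only relevant to the already-stated fact that $\{\mathbf H_\sigma\}$ is a basis. With this correction your argument is complete.

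One small imprecision remains. When $\sigma$ has several maximal runs of identity factors, a term of $\Sp^C_\sigma(\mathbf H)$ can alter several runs at once, not just one. The lexicographic comparison is still decided at the first altered letter, which lies in the earliest altered run and strictly increases in size, so $\tau>\sigma$ still holds.
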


\subsection{Extending the raising operators}
If $\sigma=\sigma^{(1)}\curvearrowright\cdots\curvearrowright\sigma^{(r)}$ is its
right--atomic factorization, define
\[
 \mathbf E_\sigma
 =\mathbf G_{\sigma^{(1)}}\cdots\mathbf G_{\sigma^{(r)}}.
\]
Let $\omega_n=n \dots 21$. Consider that 
\[
 \mathbf E=\sum_{n\geq0}\mathbf E_n,
\]
where $\mathbf E_n=\mathbf E_{\omega_n}=\mathbf G_{\omega_n},$ is a free root of $\FQSym$. 
Let  $\mathbf E_0=1$ and $\mathbf E_n=0$ for $n<0$.
  Moreover, for $n>0$ the coefficient of every $\mathbf G_\sigma$ in
$\sum_{i=0}^n(-1)^i\mathbf G_{\iota_i}\mathbf G_{\omega_{n-i}}$
is zero. Therefore, 
\[
 \mathbf E(u)=\mathbf H(-u)^{-1}.
\]

For a right--atomic permutation $\rho\in\mathfrak S_m$, define
$\rho\langle d\rangle$ for integers $d\geq0$ as follows. For
$0\leq d\leq m$, $\rho\langle d\rangle$ is obtained by standardizing the first $d$ letters of $\rho$.
For $d>m$, let
\[
 \rho\langle d\rangle=d(d-1)\cdots(m+1)\rho_1\cdots\rho_m.
\]
In particular, $\rho\langle0\rangle=\emptyset$,
$\rho\langle m\rangle=\rho$, and $\rho\langle d\rangle$ is
right--atomic for every $d>m$, since its first letter is $d$.
For example, $231\langle5\rangle=54231$ and
$231\langle2\rangle=12$. Also
$\omega_m\langle d\rangle=\omega_d$.

When $\sigma=\sigma^{(1)}\curvearrowright\cdots\curvearrowright\sigma^{(r)}$, let
$\mathbf n(\sigma)=(n_1,\ldots,n_r)$, where
$n_i=|\sigma^{(i)}|$. 
For nonnegative integers $k_{ij}$, set
\[
 d_i=n_i+\sum_{j>i}k_{ij}-\sum_{j<i}k_{ji},
\]
and define the evaluation of a raising monomial by
\[
 \left(\prod_{i<j}R_{ij}^{k_{ij}}\right)\mathbf E_\sigma
 =\begin{cases}
  \displaystyle\prod_{i=1}^r
       \mathbf E_{\sigma^{(i)}\langle d_i\rangle},
       &d_i\geq0\text{ for every }i,\\[4pt]
  0,&\text{otherwise}.
 \end{cases}
\]
The product is taken from left to right, with an empty permutation
contributing $1$. The original slots are retained until the whole
monomial is evaluated.
For each length $r$, choose scalar series
$\mathbf C=(C_{ij})_{1\leq i<j\leq r}$ with
$C_{ij}(v)=\sum_{k\geq0}c_{ij,k}v^k\in K[[v]]$ and $c_{ij,0}=1$.
Define
\begin{equation*}\label{eq:Fpairkernel}
 \Ue_\sigma(\mathbf C;\mathbf E)
 =\left(\prod_{1\leq i<j\leq r}C_{ij}(R_{ij})\right)\mathbf E_\sigma.
\end{equation*}

The proof of the following proposition is analogous to that of  Theorem \ref{prop:mainW}. 
\begin{theorem}\label{thm:Fambient}
For each length $r$, choose scalar series $\mathbf C=(C_{ij})$
with constant terms $1$. The elements
$\{\Ue_\sigma(\mathbf C;\mathbf E):\sigma\in\mathfrak S_n\}$
form a basis of $\FQSym_n\otimes_{\mathbb C}K$. Taking every
$C_{ij}(v)$ equal to $1-v$ or to $(1-v)/(1-qv)$ extends the Schur
or modified Hall--Littlewood lifts for the root
$\mathbf E$, respectively.
\end{theorem}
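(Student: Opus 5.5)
The plan is to prove unitriangularity with respect to the basis $\{\mathbf E_\sigma:\sigma\in\mathfrak S_n\}$, in the same way as Theorem~\ref{prop:mainW}. First, $\{\mathbf E_\sigma\}$ is a basis of $\FQSym_n$. This is the mirror of the statement for $\{\mathbf H_\sigma\}$, and I would derive it from that statement. The map $\sigma\mapsto \omega_n\sigma$, which complements the values ($\sigma_i\mapsto n+1-\sigma_i$), sends right-shifted concatenation to left-shifted concatenation. It therefore induces an anti/automorphism of $\FQSym$ on the $\mathbf G$ basis that carries $\mathbf H_\sigma$ to $\mathbf E_{\bar\sigma}$. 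Alternatively, $\{\mathbf E_\sigma\}$ is triangular with respect to $\{\mathbf G_\sigma\}$: the leading term of $\mathbf G_{\sigma^{(1)}}\cdots\mathbf G_{\sigma^{(r)}}$ is $\mathbf G_\sigma$, and every other shuffle term $\gamma$ is smaller in a suitable order, for instance lexicographic on one-line notation. Either route gives that $\{\mathbf E_\sigma\}$ has the right cardinality and is linearly independent.

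Second, I would show that the sum defining $\Ue_\sigma(\mathbf C;\mathbf E)$ is finite. Let $\mathbf n(\sigma)=(n_1,\dots,n_r)$ and use the weighted identity from Theorem~\ref{prop:WCalpha}:
\[
\sum_{i<j}(j-i)k_{ij}=\sum_i(i-1)n_i-\sum_i(i-1)d_i.
\]
When all $d_i\ge 0$, this is bounded by $\sum_i (i-1)n_i$, so only finitely many monomials contribute. Each surviving term is $\prod_i\mathbf E_{\sigma^{(i)}\langle d_i\rangle}$ with $\sum_i d_i=n$. Each $\sigma^{(i)}\langle d_i\rangle$ is either empty or right-atomic. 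For $d_i>n_i$ this holds because the first letter is $d_i$. For $d_i\le n_i$ I need the first $d_i$ letters of a right-atomic word, after standardization, to stay right-atomic. This can fail, so the surviving product should be interpreted as $\mathbf E_\tau$ with $\tau=\sigma^{(1)}\langle d_1\rangle\curvearrowright\cdots\curvearrowright\sigma^{(r)}\langle d_r\rangle$. That identification is valid whether or not the factors are atomic, since $\mathbf E$ is multiplicative under $\curvearrowright$ by uniqueness of the factorization. So every surviving term is a single basis element $\mathbf E_\tau$ with $\tau\in\mathfrak S_n$, carrying coefficient $\prod c_{ij,k_{ij}}$.

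Third comes the triangularity, which I expect to be the main obstacle. The identity monomial gives $\mathbf E_\sigma$ with coefficient $1$. For a nonidentity monomial, let $i$ be the least index with some $k_{ij}>0$. Then $d_1=n_1,\dots,d_{i-1}=n_{i-1}$, and $d_i>n_i$ because the raising operators only remove mass from later slots. So the first $i-1$ atomic factors are unchanged, and slot $i$ now carries a right-atomic factor of size $d_i>n_i$. The atomic factorization of $\tau$ therefore begins with $\sigma^{(1)},\dots,\sigma^{(i-1)}$, followed by a factor of size $d_i>n_i$; any subsequent splitting of later slots into several atoms does not affect this prefix. Ordering permutations by the lexicographic order of their sequence of right-atomic factor sizes, refined by any total order on ties, gives $\tau>\sigma$. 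The transition matrix from $\{\mathbf E_\sigma\}$ is therefore unitriangular over $K$, so $\{\Ue_\sigma(\mathbf C;\mathbf E)\}$ is a basis of $\FQSym_n\otimes K$. The delicate point is checking that empty slots ($d_j=0$) and non-atomic truncations never disturb this prefix comparison. This holds because they occur only at indices $j>i$.

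Finally, take $\sigma=\omega_{a_1}\curvearrowright\cdots\curvearrowright\omega_{a_r}$. Then $\omega_m\langle d\rangle=\omega_d$ gives $\Ue_\sigma(\mathbf C;\mathbf E)=\Ue_\alpha(\mathbf C;\mathbf E)$ with $\alpha=(a_1,\dots,a_r)$. By the theorem identifying iterated Bernstein and two-parameter Bernstein operators with $\Ue_\alpha$, the choices $C_{ij}=1-v$ and $C_{ij}=(1-v)/(1-qv)$ recover $S_{(\alpha,\mathbf E)}$ and $\widetilde S_{(\alpha,\mathbf E)}(q)$. This is the claimed extension.
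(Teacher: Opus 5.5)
Your proof is correct and follows the paper's route. The paper proves this theorem by direct analogy with Theorem~\ref{prop:mainW}: finiteness from the weighted bound, the identity monomial contributing $\mathbf E_\sigma$, and the first altered atomic factor strictly growing in size, which gives unitriangularity in a lexicographic order on atomic factors. Your handling of non-atomic truncations via $\mathbf E_\sigma\mathbf E_\tau=\mathbf E_{\sigma\curvearrowright\tau}$, and your check that $\{\mathbf E_\sigma\}$ is a basis, fill in points the paper leaves implicit. One small correction in your second route: the non-leading terms $\mathbf G_\gamma$ are lexicographically \emph{larger} than $\mathbf G_\sigma$, not smaller (e.g.\ $\mathbf E_{12}=\mathbf G_{12}+\mathbf G_{21}$); this is harmless for the triangularity argument.
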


For a fixed $\mu\vdash n$, take $K=\mathbb C(q,t)$ and choose
$\mathbf C=\mathbf C^{\mu^{\mathsf t}}(q,t)$ for length $n$,
with any constant-term-one choices for the other lengths.
The basis element indexed by $\iota_n=1\curvearrowright\cdots\curvearrowright1$ is
\[
 \Ue_{\iota_n}(\mathbf C;\mathbf E)
 =\Ue_{(1^n)}(\mathbf C^{\mu^{\mathbf t}}(q,t);\mathbf E)
 =\widetilde{\mathsf M}_\mu(\mathbf H;q,t).
\]
Its commutative image is $\widetilde H_\mu(X;q,t)$.

 \begin{remark}
     Similar constructions in this section can be carried out in the Hopf algebra of quasisymmetric functions in noncommuting variables \cite{Hivert}. 
 \end{remark}

\section*{Acknowledgments}
The authors made minimal use of OpenAI Codex, mainly for editorial assistance.
All mathematical ideas presented in this paper are the authors' own.

\enlargethispage{3\baselineskip}


\begin{thebibliography}{99}
\bibitem{ABS} M. Aguiar, N. Bergeron, and F. Sottile,
\emph{Combinatorial Hopf algebras and generalized Dehn--Sommerville relations},
Compos. Math. \textbf{142} (2006), no.~1, 1--30.

\bibitem{ALvW} F. Aliniaeifard, S. X. Li, and S. van Willigenburg,
\emph{Schur functions in noncommuting variables},
Adv. Math. \textbf{406} (2022), article 108536.

\bibitem{BBS} C. Berg, N. Bergeron, F. Saliola, L. Serrano, and M. Zabrocki,
\emph{A lift of the Schur and Hall--Littlewood bases to non-commutative symmetric functions},
Canad. J. Math. \textbf{66} (2014), no.~3, 525--565.


\bibitem{BGHT} F. Bergeron, A. M. Garsia, M. Haiman, and G. Tesler,
\emph{Identities and positivity conjectures for some remarkable operators
in the theory of symmetric functions},
Methods Appl. Anal. \textbf{6} (1999), no.~3, 363--420.


\bibitem{BHMPS} J. Blasiak, M. Haiman, J. Morse, A. Pun, and G. H. Seelinger,
\emph{A raising operator formula for Macdonald polynomials},
Forum Math. Sigma \textbf{13} (2025), 13:e47 1--18.
\href{https://doi.org/10.1017/fms.2025.8}{doi:10.1017/fms.2025.8}.

\bibitem{CJ} S. Cho and E. Y. Jin,
\emph{Hall--Littlewood functions in noncommuting variables},
\href{https://arxiv.org/abs/2606.21041}{arXiv:2606.21041}, 2026.

\bibitem{DHT} G. Duchamp, F. Hivert, and J.-Y. Thibon,
\emph{Noncommutative symmetric functions VI: Free quasi-symmetric functions and related algebras},
Internat. J. Algebra Comput. \textbf{12} (2002), no.~5, 671--717.


\bibitem{GarsiaMilne} A. M. Garsia,
\emph{Orthogonality of Milne's polynomials and raising operators},
Discrete Math. \textbf{99} (1992), nos.~1--3, 247--264.


\bibitem{GKLRT} I. M. Gelfand, D. Krob, A. Lascoux, B. Leclerc,
V. S. Retakh, and J.-Y. Thibon,
\emph{Noncommutative symmetric functions},
Adv. Math. \textbf{112} (1995), no.~2, 218--348.

\bibitem{GR} D. Grinberg and V. Reiner,
\emph{Hopf Algebras in Combinatorics},
arXiv:1409.8356 [math.CO], 2020.

{ 
\bibitem{HMZ} J. Haglund, J. Morse, and M. Zabrocki,
\emph{A compositional shuffle conjecture specifying touch points of the Dyck path},
Canad. J. Math. \textbf{64} (2012), no.~4, 822--844.
}

\bibitem{Hivert} F. Hivert, \emph{Combinatoire des fonctions quasi-sym{\'e}triques}, PhD thesis Universit{\'e} de Marne-la-Vall{\'e}e, 1999. 1999MARN0051.

\bibitem{Jing} N. H. Jing,
\emph{Vertex operators and Hall--Littlewood symmetric functions},
Adv. Math. \textbf{87} (1991), no.~2, 226--248.

\bibitem{JL} N. Jing and Y. Li,
\emph{A lift of Schur's $Q$-functions to the peak algebra},
J. Combin. Theory Ser. A \textbf{135} (2015), 268--290.

{ 
\bibitem{KN} A. N. Kirillov and M. Noumi,
\emph{$q$-Difference raising operators for Macdonald polynomials and the
integrality of transition coefficients},
in \emph{Algebraic Methods and $q$-Special Functions},
CRM Proc. Lecture Notes \textbf{22}, Amer. Math. Soc., Providence, RI,
1999, 227--243; arXiv:q-alg/9605005.
}

{ 
\bibitem{LV} L. Lapointe and L. Vinet,
\emph{Rodrigues formulas for the Macdonald polynomials},
Adv. Math. \textbf{130} (1997), no.~2, 261--279.
}

\bibitem{Mac} I. G. Macdonald,
\emph{Symmetric Functions and Hall Polynomials}, second edition,
Oxford University Press, Oxford, 1995.

\bibitem{MR} C. Malvenuto and C. Reutenauer,
\emph{Duality between quasi-symmetric functions and the Solomon descent algebra},
J. Algebra \textbf{177} (1995), no.~3, 967--982.


\bibitem{RS} M. H. Rosas and B. E. Sagan,
\emph{Symmetric functions in noncommuting variables},
Trans. Amer. Math. Soc. \textbf{358} (2006), no.~1, 215--232.



\bibitem{SZ} M. Shimozono and M. Zabrocki,
\emph{Hall--Littlewood vertex operators and generalized Kostka polynomials},
Adv. Math. \textbf{158} (2001), no.~1, 66--85.



\bibitem{Wolf} M. C. Wolf,
\emph{Symmetric functions of non-commutative elements},
Duke Math. J. \textbf{2} (1936), no.~4, 626--637.

\bibitem{Zel} A. V. Zelevinsky, \emph{Representations of finite classical groups. A Hopf algebra approach.} Lecture Notes in Mathematics \textbf{869}, Springer-Verlag, Berlin-New York, 1981.

\bibitem{Zhao} W. Zhao,
\emph{Noncommutative symmetric systems over associative algebras},
J. Pure Appl. Algebra \textbf{210} (2007), no.~2, 363--382.

\end{thebibliography}
\end{document}